\documentclass[a4paper]{amsart}

\usepackage{amsmath,amsbsy,comment}
\usepackage[psamsfonts]{amssymb}
\usepackage{hyperref}
\hypersetup{%
  bookmarksnumbered=true,%
  bookmarks=true,%
}
%%% Start of metadata
%
\title{Morse--Bott inequalities for manifolds with boundary}

%  First author
%
\author{Ryuma Orita} 
\address{Graduate School of Mathematical Sciences, the University of Tokyo, 3-8-1 Komaba, Meguro-ku, Tokyo, 153-0041, Japan}
\email{orita@ms.u-tokyo.ac.jp}
\urladdr{https://sites.google.com/site/oritaryuma/}

\subjclass[2010]{57R70}
\keywords{critical points, critical submanifolds, Morse functions, Morse--Bott functions}

%%% End of metadata
%
%%% Start of user-defined macros %%%

\newtheorem{theorem}{Theorem}[section]
\newtheorem{lemma}[theorem]{Lemma}
\newtheorem{proposition}[theorem]{Proposition}
\newtheorem{corollary}[theorem]{Corollary}

\theoremstyle{definition}
\newtheorem{definition}[theorem]{Definition}
\newtheorem*{remark}{Remark}

\newcommand{\Int}{\mathop{\mathrm{Int}}\nolimits}
\newcommand{\Cr}{\mathop{\mathrm{Crit}}\nolimits}
\newcommand{\ind}{\mathop{\mathrm{ind}}\nolimits}
\newcommand{\rank}{\mathop{\mathrm{rank}}\nolimits}
\newcommand{\grad}{\mathop{\mathrm{grad}}\nolimits}
\newcommand{\Ker}{\mathop{\mathrm{Ker}}\nolimits}

\newcommand{\var}{\mathop{\mathrm{var}}\nolimits}
\newcommand{\tp}{\mathop{\mathrm{top}}\nolimits}
\newcommand{\1}{\mbox{1}\hspace{-0.25em}\mbox{l}}

\numberwithin{equation}{section}

%%% End of user-defined macros %%%

\begin{document}

\maketitle

%%%%%%%%%%%%%%%%%%%%%%%%%%%%%%%%%%%%%%%%%%%%%%%%%%%%%%%%%%%%%%%%%%%%%%%%%%%
%%%%%%%%%%%%%%%%%%%%%%%%%%%%%%%%%%%%%%%%%%%%%%%%%%%%%%%%%%%%%%%%%%%%%%%%%%%
%%%%%%%%%%%%%%%%%%%%%%%%%%%%%%%%%%%%%%%%%%%%%%%%%%%%%%%%%%%%%%%%%%%%%%%%%%%
%%%%%%%%%%%%%%%%%%%%%%%%%%%%%%%%%%%%%%%%%%%%%%%%%%%%%%%%%%%%%%%%%%%%%%%%%%%
%%%%%%%%%%%%%%%%%%%%%%%%%%%%%%%%%%%%%%%%%%%%%%%%%%%%%%%%%%%%%%%%%%%%%%%%%%%

\begin{abstract}
In the present paper, we define Morse--Bott functions on manifolds with boundary which are generalizations of Morse functions
and show Morse--Bott inequalities for these manifolds.
\end{abstract}

%%%%%%%%%%%%%%%%%%%%%%%%%%%%%%%%%%%%%%%%%%%%%%%%%%%%%%%%%%%%%%%%%%%%%%%%%%%
%%%%%%%%%%%%%%%%%%%%%%%%%%%%%%%%%%%%%%%%%%%%%%%%%%%%%%%%%%%%%%%%%%%%%%%%%%%
%%%%%%%%%%%%%%%%%%%%%%%%%%%%%%%%%%%%%%%%%%%%%%%%%%%%%%%%%%%%%%%%%%%%%%%%%%%
%%%%%%%%%%%%%%%%%%%%%%%%%%%%%%%%%%%%%%%%%%%%%%%%%%%%%%%%%%%%%%%%%%%%%%%%%%%
%%%%%%%%%%%%%%%%%%%%%%%%%%%%%%%%%%%%%%%%%%%%%%%%%%%%%%%%%%%%%%%%%%%%%%%%%%%

\section{Introduction}

In the early 1930s, M. Morse \cite{Morse} showed the well-known Morse inequalities which describe the relationship
between the Betti numbers of manifolds and the number of critical points of functions.
Many authors found ``Morse inequalities'' for manifolds with non-empty boundary.
Recently, motivated by the Floer homology, M. Akaho \cite{Akaho} constructed the Morse complex
(which is derived from Witten's work \cite{Witten}) by setting specific conditions on the boundary.
F. Laudenbach \cite{Laudenbach} constructed the Morse complex
by introducing the pseudo-gradient vector fields adapted to the boundary which control flow lines near the boundary
and obtained Morse inequalities for manifolds with boundary.

In 1954, R. Bott \cite{Bott} generalized the Morse theory for
functions which have degenerate critical points under some assumptions.
Indeed, he \cite{Bott2, Bott3, Bott4} established degenerate Morse inequalities (called Morse--Bott inequalities).
After that, J.-M. Bismut \cite{Bismut}, B. Helffer and J. Sj\"{o}strand \cite{HS} gave different proofs of these inequalities.
Recently, A. Banyaga and D. Hurtubise \cite{BH} published a proof
by describing an explicit perturbation of a given Morse--Bott function (called the perturbation technique)
under the assumption that negative normal bundles are all orientable.
Such functions appear when the domain manifold $M$ is equipped with the action of a compact Lie group $G$.
In that case, the critical set of any $G$-invariant smooth function is a disjoint union of finitely many closed submanifolds.

In the present paper, by using the definition introduced by F. Laudenbach \cite{Laudenbach},
we define Morse--Bott functions on manifolds with boundary which are generalizations of Morse functions.
Then we formulate Morse--Bott inequalities for manifolds with boundary (Theorem \ref{theorem:main})
and show them by using the perturbation technique.

%%%%%%%%%%%%%%%%%%%%%%%%%%%%%%%%%%%%%%%%%%%%%%%%%%%%%%%%%%%%%%%%%%%%%%%%%%%
%%%%%%%%%%%%%%%%%%%%%%%%%%%%%%%%%%%%%%%%%%%%%%%%%%%%%%%%%%%%%%%%%%%%%%%%%%%
%%%%%%%%%%%%%%%%%%%%%%%%%%%%%%%%%%%%%%%%%%%%%%%%%%%%%%%%%%%%%%%%%%%%%%%%%%%
%%%%%%%%%%%%%%%%%%%%%%%%%%%%%%%%%%%%%%%%%%%%%%%%%%%%%%%%%%%%%%%%%%%%%%%%%%%
%%%%%%%%%%%%%%%%%%%%%%%%%%%%%%%%%%%%%%%%%%%%%%%%%%%%%%%%%%%%%%%%%%%%%%%%%%%

\section{Preliminaries: Morse homology with local coefficients}\label{section:preliminaries}

In this section, we define the Morse homology with local coefficients and state the Morse homology theorem and the Morse inequalities for closed manifolds.
Our main reference is \cite[Subsection 7.2]{O}.

Let $M$ be an $m$-dimensional connected closed manifold and $f\colon M\to\mathbb{R}$ a Morse function on $M$.
Let $R$ be a ring and $\mathcal{L}$ a local system of $R$-modules over $M$.
We define the Morse complex $\left(C_{\ast}(f;\mathcal{L}),\partial_{\ast}^{f;\mathcal{L}}\right)$ for $f$ with local coefficients in $\mathcal{L}$.
For each $k=0,1,\ldots,m$, the $k$-chain group is defined by
\[
	C_k(f;\mathcal{L})=\bigoplus_{p\in\Cr_k(f)}\mathcal{L}_p,
\]
where $\mathcal{L}_p$ is the fiber of $\mathcal{L}$ over $p\in M$.
The boundary operator $\partial_k^{f;\mathcal{L}}\colon C_k(f;\mathcal{L})\to C_{k-1}(f;\mathcal{L})$ is defined by
\[
	\partial_k^{f;\mathcal{L}}(s\otimes p)=%
	\sum_{q\in\Cr_{k-1}(f)}\left(\sum_{\gamma\in\mathcal{M}_f(p,q)}n_f(\gamma)\Phi_{\tilde{\gamma}}(s)\right)\otimes q,
\]
where $p\in\Cr_k(f)$, $s\in\mathcal{L}_p$,
the set $\mathcal{M}_f(p,q)=W_f^u(p)\cap W_f^s(q)/\mathbb{R}$ is the moduli space of unparametrized (minus) gradient flow lines of $f$ from $p$ to $q$,
$n_f(\gamma)$ is the sign of $\gamma$ determined by orientations of the unstable manifolds for $f$,
the path $\tilde{\gamma}\colon [0,1]\to M$ is a reparametrization of $\gamma\colon\mathbb{R}\to M$ such that $\tilde{\gamma}(0)=p$ and $\tilde{\gamma}(1)=q$,
and $\Phi_{\tilde{\gamma}}\colon\mathcal{L}_p\to\mathcal{L}_q$ is the $R$-isomorphism associated with $\tilde{\gamma}$.

\begin{theorem}[Morse homology theorem \cite{O}]\label{theorem:mht}
The pair $\left(C_{\ast}(f;\mathcal{L}),\partial_{\ast}^{f;\mathcal{L}}\right)$ is a chain complex,
i.e., $\partial_{\ast}^{f;\mathcal{L}}$ is well-defined and $\partial_{\ast-1}^{f;\mathcal{L}}\circ\partial_{\ast}^{f;\mathcal{L}}=0$.
Moreover, its homology is isomorphic to the singular homology of $M$ with local coefficients in $\mathcal{L}$.
Namely, for every $k=0,1,\ldots,m$ we have
\[
	H_k\left(C_{\ast}(f;\mathcal{L}),\partial_{\ast}^{f;\mathcal{L}}\right)\cong H_k(M;\mathcal{L}).
\]
\end{theorem}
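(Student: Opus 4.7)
The plan is to follow the classical Morse--Smale approach, paying extra attention to the interaction between the holonomy $\Phi_{\tilde{\gamma}}$ of the local system and the gluing of broken trajectories. First I would fix a pseudo-gradient vector field for $f$ satisfying the Morse--Smale transversality condition, so that $W_f^u(p)$ and $W_f^s(q)$ meet transversally for every pair of critical points. Then $\mathcal{M}_f(p,q)$ is a smooth manifold of dimension $\ind(p)-\ind(q)-1$. In particular, when $\ind(p)-\ind(q)=1$, it is a compact $0$-manifold (no breaking is possible on dimensional grounds), so the sum defining $\partial_k^{f;\mathcal{L}}$ is finite and the operator is well-defined. Orientations on the unstable manifolds $W_f^u(p)$ determine the signs $n_f(\gamma)\in\{\pm 1\}$ in the standard way.

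For $\partial^2=0$, fix $p\in\Cr_k(f)$ and $r\in\Cr_{k-2}(f)$ and consider the $1$-dimensional moduli space $\mathcal{M}_f(p,r)$. Its standard compactification by broken trajectories has boundary
\[
\partial\overline{\mathcal{M}_f(p,r)}=\bigsqcup_{q\in\Cr_{k-1}(f)}\mathcal{M}_f(p,q)\times\mathcal{M}_f(q,r).
\]
For each broken trajectory $(\gamma_1,\gamma_2)$ the concatenation $\tilde{\gamma}_1\ast\tilde{\gamma}_2$ is homotopic rel endpoints to the reparametrized path $\tilde{\gamma}$ of any nearby unbroken $\gamma\in\mathcal{M}_f(p,r)$, so $\Phi_{\tilde{\gamma}_2}\circ\Phi_{\tilde{\gamma}_1}=\Phi_{\tilde{\gamma}}$ is constant on each connected component of $\mathcal{M}_f(p,r)$. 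Since a compact oriented $1$-manifold has vanishing signed boundary, summing over the pairs at the two ends of each component gives $\partial_{k-1}^{f;\mathcal{L}}\circ\partial_k^{f;\mathcal{L}}=0$ coefficient by coefficient on $\mathcal{L}_p$.

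To identify the Morse homology with $H_\ast(M;\mathcal{L})$, I would put the CW structure on $M$ whose open $k$-cells are the unstable manifolds $W_f^u(p)$ with $p\in\Cr_k(f)$ (after a small perturbation to make the attaching maps continuous, as in the Laudenbach/Kalmbach thickening). With trivial coefficients the Morse complex coincides with the cellular chain complex of this decomposition, and the identification passes to coefficients in $\mathcal{L}$ because the holonomy $\Phi_{\tilde{\gamma}}$ along flow-line paths is precisely what encodes the twisting in the cellular complex with local coefficients. The theorem then follows from the classical agreement between cellular and singular homology with local coefficients, which gives
\[
H_k\!\left(C_\ast(f;\mathcal{L}),\partial_\ast^{f;\mathcal{L}}\right)\cong H_k(M;\mathcal{L}).
\]

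The step I expect to be the main obstacle is the homotopy-invariance statement underlying the $\partial^2=0$ argument: namely, showing that for every sequence of unbroken trajectories in $\mathcal{M}_f(p,r)$ converging to a broken trajectory $(\gamma_1,\gamma_2)$, the reparametrized path $\tilde{\gamma}$ is homotopic rel endpoints to $\tilde{\gamma}_1\ast\tilde{\gamma}_2$, so that the induced holonomies in $\mathcal{L}$ agree. This is where the Morse--Smale gluing analysis must be combined with the local system; once this compatibility is in place, both the chain-complex property and the comparison with the twisted cellular complex reduce to classical arguments.
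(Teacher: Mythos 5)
The paper does not prove Theorem~\ref{theorem:mht}: it quotes it directly from Oancea~\cite[\S 7.2]{O} with no argument of its own, so there is no ``paper's proof'' to compare your sketch against. Judged on its own terms, your outline follows the standard Morse--Smale route and the logical skeleton is sound: finiteness of $\mathcal{M}_f(p,q)$ in relative index one by ruling out breaking, $\partial^2=0$ from the signed-boundary cancellation on the compactified one-dimensional moduli, and identification with singular homology via a cell structure from unstable manifolds. You also put your finger on exactly the point that has to be added when local coefficients are present: the holonomy $\Phi_{\tilde\gamma}$ must be locally constant across the compactification, i.e.\ a glued trajectory near the broken one $(\gamma_1,\gamma_2)$ must have reparametrized path homotopic rel endpoints to $\tilde\gamma_1 * \tilde\gamma_2$. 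This does follow from the gluing analysis (the glued path is $C^0$-close to the broken one with matching endpoints), and it is the only genuinely new ingredient over the $\mathbb{Z}$-coefficient case.

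Two caveats worth recording. First, the step ``the unstable manifolds give a CW structure whose cellular chain complex is the Morse complex'' is well known to be delicate: the closures of unstable manifolds of a Morse--Smale flow need not be subcomplexes, and producing genuine characteristic maps requires nontrivial work (Latour, Kalmbach, Qin, and others). You acknowledge this with the ``thickening'' remark, but a reader should be told it is a substantial input rather than a formality. Second, the route more commonly taken in the Floer-flavored literature (and plausibly the one in~\cite{O}, given that paper's spectral-sequence context) sidesteps the CW issue: one either establishes invariance of Morse homology under change of $(f,X)$ by continuation maps and then computes for a self-indexing $f$ whose sublevel sets give a handle decomposition, or one filters by critical value and compares the resulting spectral sequence directly with the Leray-type filtration on singular chains with $\mathcal{L}$-coefficients. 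Either of these avoids attaching-map analyticities entirely and may be closer in spirit to what the cited source does. None of this makes your sketch wrong, but it does mean the third paragraph is the one carrying hidden weight.
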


We state the Morse inequalities for closed manifolds.

\begin{definition}
The \textit{Morse counting polynomial} of $f$ is defined to be
\[
	\mathcal{M}_t(f)=\sum_{p\in\Cr(f)}t^{\ind_fp},
\]
and the \textit{Poincar\'e polynomial} $\mathcal{P}_t(M;\mathcal{L})$ of $M$ with local coefficients in $\mathcal{L}$ by
\[
	\mathcal{P}_t(M;\mathcal{L})=\sum_{k=0}^m \rank_R H_k(M;\mathcal{L})\,t^k.
\]
\end{definition}

\begin{theorem}[Morse inequalities for closed manifolds]\label{theorem:mi}
Let $M$ be a connected closed manifold and $f$ a Morse function on $M$.
Let $R$ be a ring and $\mathcal{L}$ a local system of $R$-modules over $M$.
Then there exists a polynomial $\mathcal{R}(t)$ with non-negative coefficients such that
\[
	\mathcal{M}_t(f)-\frac{1}{\rank{\mathcal{L}}}\mathcal{P}_{t}(M;\mathcal{L})=(1+t)\mathcal{R}(t).
\]
\end{theorem}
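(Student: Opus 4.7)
The plan is to derive the claimed inequality from the Morse homology theorem (Theorem \ref{theorem:mht}) combined with the standard algebraic lemma relating the ranks of the chain groups of a finite chain complex to the ranks of its homology.

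First, I would compute the rank of each Morse chain group. Since $C_k(f;\mathcal{L})=\bigoplus_{p\in\Cr_k(f)}\mathcal{L}_p$ is a direct sum of $\#\Cr_k(f)$ copies of a module of rank $\rank\mathcal{L}$, additivity of rank on direct sums gives
\[
	\rank_R C_k(f;\mathcal{L})=\#\Cr_k(f)\cdot\rank\mathcal{L},
\]
so that $\sum_{k=0}^{m}\rank_R C_k(f;\mathcal{L})\,t^k=\rank(\mathcal{L})\cdot\mathcal{M}_t(f)$. By Theorem \ref{theorem:mht}, the homology of $(C_{\ast}(f;\mathcal{L}),\partial_{\ast}^{f;\mathcal{L}})$ is isomorphic to $H_{\ast}(M;\mathcal{L})$, so $\sum_{k}\rank_R H_k(C_{\ast}(f;\mathcal{L}),\partial_{\ast}^{f;\mathcal{L}})\,t^k=\mathcal{P}_t(M;\mathcal{L})$.

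Next, I would invoke the standard algebraic fact: for any bounded chain complex of finitely generated $R$-modules for which rank is additive on short exact sequences, there exists a polynomial $Q(t)$ with non-negative coefficients such that
\[
	\sum_{k}\rank_R C_k\,t^k-\sum_{k}\rank_R H_k\,t^k=(1+t)Q(t).
\]
The verification uses the exact sequences $0\to Z_k\to C_k\to B_{k-1}\to 0$ and $0\to B_k\to Z_k\to H_k\to 0$: writing $b_k=\rank_R B_k$, one finds the left-hand side equals $\sum_{k}(b_k+b_{k-1})t^k=(1+t)\sum_{k}b_k t^k$, and each $b_k\geq 0$.

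Applied to $C_{\ast}=C_{\ast}(f;\mathcal{L})$, this yields
\[
	\rank(\mathcal{L})\cdot\mathcal{M}_t(f)-\mathcal{P}_t(M;\mathcal{L})=(1+t)Q(t),
\]
and dividing by $\rank\mathcal{L}$ produces the claimed identity with $\mathcal{R}(t)=Q(t)/\rank\mathcal{L}$, whose coefficients are still non-negative. The only delicate point is ensuring that the rank function on $R$-modules is well-behaved on the short exact sequences appearing above; if $R$ is a PID or field this is automatic, and otherwise this hypothesis is presumably built into the framework under which Theorem \ref{theorem:mht} is formulated.
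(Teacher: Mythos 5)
Your proof is correct and is exactly the ``straightforward'' application of Theorem \ref{theorem:mht} that the paper invokes without further detail: the rank of each chain group is $\#\Cr_k(f)\cdot\rank\mathcal{L}$, the homology is identified with $H_{\ast}(M;\mathcal{L})$, and the two short exact sequences $0\to Z_k\to C_k\to B_{k-1}\to 0$ and $0\to B_k\to Z_k\to H_k\to 0$ give the standard factorization through $(1+t)$ with non-negative coefficients $b_k=\rank_R B_k$. Your closing caveat about additivity of rank is appropriate; the paper leaves it implicit, and in the only case actually used later (local coefficients in an orientation bundle, so $R=\mathbb{Z}$ and $\rank\mathcal{L}=1$) it holds automatically.
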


The proof is straightforward by applying Theorem \ref{theorem:mht}.

%%%%%%%%%%%%%%%%%%%%%%%%%%%%%%%%%%%%%%%%%%%%%%%%%%%%%%%%%%%%%%%%%%%%%%%%%%%
%%%%%%%%%%%%%%%%%%%%%%%%%%%%%%%%%%%%%%%%%%%%%%%%%%%%%%%%%%%%%%%%%%%%%%%%%%%
%%%%%%%%%%%%%%%%%%%%%%%%%%%%%%%%%%%%%%%%%%%%%%%%%%%%%%%%%%%%%%%%%%%%%%%%%%%
%%%%%%%%%%%%%%%%%%%%%%%%%%%%%%%%%%%%%%%%%%%%%%%%%%%%%%%%%%%%%%%%%%%%%%%%%%%
%%%%%%%%%%%%%%%%%%%%%%%%%%%%%%%%%%%%%%%%%%%%%%%%%%%%%%%%%%%%%%%%%%%%%%%%%%%

\section{Main Theorem}\label{section:maintheorem}

In this section, we define Morse--Bott functions on manifolds with boundary in the sense of Laudenbach
and state our main theorem (Theorem \ref{theorem:main}).
Let $M$ be an $m$-dimensional compact manifold with boundary.

\begin{definition}\label{definition:MBwb}
A $C^{\infty}$-function $f\colon M\to \mathbb{R}$ is called a \textit{Morse--Bott function} if $f$ satisfies the following conditions:
\begin{enumerate}
	\item The set of critical points $\Cr (f)=\{\, p\in M\mid df(p)=0\, \}$ is a disjoint union of connected submanifolds of the interior $\Int M$
	and each connected component of $\Cr (f)$ is non-degenerate in the sense of Bott (see \cite{Bott}).
	\item The restriction $f|_{\partial M}\colon \partial M\to \mathbb{R}$ is a Morse--Bott function on $\partial M$.
\end{enumerate}
\end{definition}

Let $f\colon M\to \mathbb{R}$ be a Morse--Bott function.
Then the critical submanifolds of $f|_{\partial M}$ are divided into two types:

\begin{definition}
A connected critical submanifold $C\subset \Cr(f|_{\partial M})$ is said to be of \textit{type $N$} (resp.\ \textit{type $D$}) if
for some $p\in C$, hence for all $p\in C$, $p$ is of type $N$ (resp.\ type $D$), i.e.,
$\left<df(p), n(p)\right>$ is negative (resp.\ positive)
where $n(p)\in T_{p}M$ is an outward normal vector to the boundary at $p$ (see \cite{Laudenbach}).
\end{definition}

We denote the critical point sets by
\[
	I(f)=\Cr(f),\quad N(f)=\{\, p\in \Cr(f|_{\partial M})\mid p\ \text{is of type $N$}\, \}
\]
and
\[
	D(f)=\{\, p\in \Cr(f|_{\partial M})\mid p\ \text{is of type $D$}\, \}.
\]
Let $C_{j}$ ($j=1,\ldots,\ell$), $\Gamma_{s}$ ($s=1,\ldots,\ell_N$) and $\Delta_{u}$ ($u=1,\ldots,\ell_D$) be the connected components
of $I(f)$, $N(f)$ and $D(f)$, respectively.
We prepare the following notation:
For $j=1,\ldots,\ell$,\: $s=1,\ldots,\ell_N$ and $u=1,\ldots,\ell_D$, let
\begin{align*}
	c_{j}&=\dim C_{j},& d_{s}^{N}&=\dim \Gamma_{s},& d_{u}^{D}&=\dim \Delta_{u},\\
	\lambda_{j}&=\ind_{f} C_{j},& \mu_{s}^{N}&=\ind_{f|_{\partial M}} \Gamma_{s},& \mu_{u}^{D}&=\ind_{f|_{\partial M}} \Delta_{u}.
\end{align*}

For a non-degenerate critical submanifold $C$ of the Morse--Bott function $f$, denote by $o(\nu^-C)$ the orientation bundle of the negative normal bundle $\nu^-C$,
where $\nu^-C$ is the maximal subbundle of the normal bundle of $C$ in $M$ such that the eigenvalues of the Hessian of $f$ are all negative.
We think of $o(\nu^-C)$ as a local system of $\mathbb{Z}$-modules of rank one over $C$.

\begin{definition}\label{definition:MBpoly}
The \textit{Morse--Bott counting polynomial of type $N$} of $f$ is defined to be
\[
	\mathcal{MB}^{N}_{t}(f)=\sum_{j=1}^{\ell} \mathcal{P}_{t}\bigl(C_{j};o(\nu^-C_j)\bigr)\, t^{\lambda_{j}} + \sum_{s=1}^{\ell_N} \mathcal{P}_{t}\bigl(\Gamma_{s};o(\nu^-\Gamma_{s})\bigr)\, t^{\mu_{s}^{N}},
\]
and the \textit{Morse--Bott counting polynomial of type $D$} of $f$ is defined to be
\[
	\mathcal{MB}^{D}_{t}(f)=\sum_{j=1}^{\ell} \mathcal{P}_{t}\bigl(C_{j};o(\nu^-C_j)\bigr)\, t^{\lambda_{j}} + \sum_{u=1}^{\ell_D} \mathcal{P}_{t}\bigl(\Delta_{u};o(\nu^-\Delta_{u})\bigr)\, t^{\mu_{u}^{D}+1}.
\]
\end{definition}

Our main result is the following theorem:

\begin{theorem}[Morse--Bott inequalities for manifolds with boundary]\label{theorem:main}
Let $M$ be a compact manifold with boundary and $f$ a Morse--Bott function on $M$.
Then there exists a polynomial $\mathcal{R}(t)$ with non-negative integer coefficients such that
\[
	\mathcal{MB}^N_t(f)-\mathcal{P}_t(M;\mathbb{Z})=(1+t)\mathcal{R}(t).
\]
\end{theorem}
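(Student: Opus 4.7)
The plan is to apply the Banyaga--Hurtubise perturbation technique to reduce the Morse--Bott setting to Laudenbach's Morse setting for manifolds with boundary, and then extract the Morse--Bott inequality through the filtered Morse complex coming from the critical submanifolds of $f$.

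\textbf{Step 1 (Perturbation).} One first chooses Morse--Smale functions $h_j$ on $C_j$, $h_s^N$ on $\Gamma_s$ and $h_u^D$ on $\Delta_u$, and applies the Banyaga--Hurtubise prescription inside tubular neighbourhoods of the critical (sub)manifolds, arranged compatibly with Laudenbach's collar structure near $\partial M$. This produces a Morse function $\tilde f$ in Laudenbach's sense whose critical points are in bijection with $\bigsqcup_j\Cr(h_j)\sqcup\bigsqcup_s\Cr(h_s^N)\sqcup\bigsqcup_u\Cr(h_u^D)$, with indices shifted by $\lambda_j$, $\mu_s^N$, $\mu_u^D$ respectively, and with the type $N/D$ of each boundary critical point inherited from its parent critical submanifold.

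\textbf{Step 2 (Filtered Morse complex).} Filter the Laudenbach--Morse chain complex $C_\ast(\tilde f;\mathbb{Z})$ (whose generators are the interior and type-$N$ critical points of $\tilde f$) by the value of $f$ at the parent critical submanifold. The $E_0$-page of the resulting spectral sequence decomposes as
\[
E_0 \;\cong\; \bigoplus_{j}C_\ast\bigl(h_j;o(\nu^-C_j)\bigr)[\lambda_j]\,\oplus\,\bigoplus_{s}C_\ast\bigl(h_s^N;o(\nu^-\Gamma_s)\bigr)[\mu_s^N],
\]
with $d_0$ equal to the Morse differentials on each critical submanifold, twisted by the orientation local system of the corresponding negative normal bundle (the twist records parallel transport of negative normal orientations along flow lines of $\tilde f$ that remain inside a single critical submanifold). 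Taking $d_0$-homology and applying Theorem~\ref{theorem:mi} on each critical (sub)manifold with the rank-one local system $o(\nu^-)$ gives $\mathcal P_t(E_1)=\mathcal{MB}^N_t(f)$.

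\textbf{Step 3 (Convergence).} By the manifold-with-boundary analogue of Theorem~\ref{theorem:mht} the total complex computes $H_\ast(M;\mathbb{Z})$, so the bounded spectral sequence converges to $\mathrm{gr}\,H_\ast(M;\mathbb{Z})$ and $\mathcal P_t(E_\infty)=\mathcal P_t(M;\mathbb{Z})$. Since $E_{r+1}=H(E_r,d_r)$, the elementary rank identity $\mathcal P_t(E_r)-\mathcal P_t(E_{r+1})=(1+t)\mathcal S_r(t)$ with $\mathcal S_r\in\mathbb Z_{\geq 0}[t]$ holds at each page, and a telescoping sum yields $\mathcal{MB}^N_t(f)-\mathcal P_t(M;\mathbb{Z})=(1+t)\mathcal R(t)$ with $\mathcal R\in\mathbb Z_{\geq 0}[t]$, as required.

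\textbf{Main obstacle.} The crux is Step~1: the Banyaga--Hurtubise perturbation must be adapted so that (i) Laudenbach's boundary adaptedness of the pseudo-gradient survives the perturbation, and (ii) the types $N$ and $D$ are preserved, which amounts to controlling the sign of the outward derivative at each boundary critical point. Once $\tilde f$ is in hand, Steps~2--3 are a standard filtered-complex argument with twisted coefficients.
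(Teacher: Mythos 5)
Your proposal follows the same overall strategy as the paper: morsify $f$ via the Banyaga--Hurtubise perturbation compatibly with Laudenbach's collar/adapted pseudo-gradient structure, and then compare the resulting Laudenbach--Morse complex of the perturbed function with the local Morse complexes on the critical submanifolds, filtered by height. Where you differ is in the final extraction. The paper does not invoke a spectral sequence: it writes each remainder polynomial $\mathcal{R}_j$, $\mathcal{R}_s^N$, $\mathcal{R}_h$ explicitly in terms of ranks of kernels of the boundary operators, reduces the theorem to the inequality $\sum_{\lambda_j+k=n}z_k^j+\sum_{\mu_s^N+k=n}z_k^{s,N}\geq z_n$, and proves it by a hands-on ``top chain'' argument (Lemmas \ref{lemma:1st} and \ref{lemma:2nd}): the leading term, with respect to the height filtration, of any cycle of the perturbed complex is a cycle of the twisted local Morse complex on a single critical submanifold, and one builds linearly independent such leading terms inductively. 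Your Steps 2--3 repackage this as the $E_0/E_1$ page of the associated filtered spectral sequence plus a telescope of the identity $\mathcal P_t(E_r)-\mathcal P_t(E_{r+1})=(1+t)\mathcal S_r(t)$. These are two bookkeepings of the same filtration idea; yours is more conceptual, the paper's avoids the spectral-sequence apparatus. Two small corrections: in Step 2, the statement $\mathcal P_t(E_1)=\mathcal{MB}^N_t(f)$ uses the Morse homology theorem, Theorem \ref{theorem:mht}, not the Morse inequalities \ref{theorem:mi}; and the parenthetical about the $o(\nu^-)$-twist, while correctly identified as the heart of the matter, is where the real work lies --- the paper's Lemma \ref{lemma:1st} carefully compares the signs $n_h(\gamma)$ and $n_{f_j}(\gamma)$ via parallel transport in $o(\nu^-C_j)$ along the relevant loop, and this is needed to know that $d_0$ is literally the twisted Morse differential rather than differing from it by uncontrolled signs.
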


\begin{corollary}
Suppose that $M$, the critical submanifolds of $f$ and their negative normal subbundles are all oriented.
Then there exists a polynomial $\mathcal{R}(t)$ with non-negative integer coefficients such that
\[
	\mathcal{MB}^{D}_{t}(f) - \mathcal{P}_{t}(M, \partial M;\mathbb{Z})= (1+t)\mathcal{R}(t).
\]
\end{corollary}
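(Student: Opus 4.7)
The plan is to derive the corollary from Theorem \ref{theorem:main} itself, applied to $-f$ in place of $f$, and then to transform the resulting identity by Poincar\'e--Lefschetz duality into the statement for $\mathcal{MB}^D_t(f)$ and $\mathcal{P}_t(M,\partial M;\mathbb{Z})$.

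First I would check that $-f$ satisfies Definition \ref{definition:MBwb} with the same critical submanifolds $C_j$, $\Gamma_s$, $\Delta_u$, but with indices transforming as $\lambda_j\mapsto m-c_j-\lambda_j$ and $\mu^D_u\mapsto (m-1)-d^D_u-\mu^D_u$, and with types $N$ and $D$ interchanged (since $\langle d(-f)(p),n(p)\rangle=-\langle df(p),n(p)\rangle$). In particular, the $\Delta_u$ are precisely the type-$N$ critical submanifolds of $-f|_{\partial M}$, so $\mathcal{MB}^N_t(-f)$ is a sum over the $C_j$ and the $\Delta_u$. Under the orientation hypotheses, the negative normal bundles of $-f$ along these submanifolds are the positive normal bundles of $f$, which are automatically oriented as complements of the oriented $\nu^-$ in the oriented ambient normal bundles. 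Hence every orientation local system appearing in $\mathcal{MB}^N_t(-f)$ is trivial, and all Poincar\'e polynomials reduce to ordinary ones with $\mathbb{Z}$-coefficients.

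Applying Theorem \ref{theorem:main} to $-f$ then yields
\[
\mathcal{MB}^N_t(-f)-\mathcal{P}_t(M;\mathbb{Z})=(1+t)\mathcal{R}_{-f}(t),
\]
where $\mathcal{R}_{-f}(t)$ has non-negative integer coefficients; a quick degree check on the left-hand side shows $\deg\mathcal{R}_{-f}\le m-1$. Next I would substitute $t\mapsto 1/t$ and multiply through by $t^m$. On each closed oriented $C_j$ of dimension $c_j$ Poincar\'e duality gives $t^{c_j}\mathcal{P}_{1/t}(C_j;\mathbb{Z})=\mathcal{P}_t(C_j;\mathbb{Z})$, so
\[
t^m\cdot\mathcal{P}_{1/t}(C_j;\mathbb{Z})\,t^{-(m-c_j-\lambda_j)}=t^{\lambda_j}\mathcal{P}_t(C_j;\mathbb{Z}),
\]
and the analogous computation on each $\Delta_u$ converts its contribution into $t^{\mu^D_u+1}\mathcal{P}_t(\Delta_u;\mathbb{Z})$; summing gives $t^m\mathcal{MB}^N_{1/t}(-f)=\mathcal{MB}^D_t(f)$. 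Lefschetz duality on the compact oriented manifold with boundary $(M,\partial M)$ gives $t^m\mathcal{P}_{1/t}(M;\mathbb{Z})=\mathcal{P}_t(M,\partial M;\mathbb{Z})$. The factor $(1+t)$ becomes $t^m(1+1/t)=(1+t)\,t^{m-1}$, so setting $\mathcal{R}(t):=t^{m-1}\mathcal{R}_{-f}(1/t)$ delivers the claimed identity; and since $\mathcal{R}(t)$ is the coefficient-reversal of a polynomial of degree $\le m-1$ with non-negative integer coefficients, it is itself such a polynomial.

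The main obstacle I anticipate is the careful bookkeeping of indices, types, and orientation bundles under $f\mapsto-f$, and in particular verifying the clean term-by-term identity $t^m\mathcal{MB}^N_{1/t}(-f)=\mathcal{MB}^D_t(f)$ via Poincar\'e duality on each critical submanifold; once that is nailed down, the rest is a routine rearrangement.
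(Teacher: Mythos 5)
Your proposal is correct and follows essentially the same route as the paper's own proof: apply Theorem~\ref{theorem:main} to $-f$, compute $\mathcal{MB}^N_t(-f)=t^m\mathcal{MB}^D_{1/t}(f)$ via Poincar\'e duality on each closed oriented critical submanifold, invoke Lefschetz duality for $(M,\partial M)$, and set $\mathcal{R}(t)=t^{m-1}\mathcal{R}_{-f}(1/t)$. If anything, your explicit remark that the negative normal bundles of $-f$ are the positive normal bundles of $f$ (oriented as complements inside the oriented ambient normal bundles), and your degree check $\deg\mathcal{R}_{-f}\le m-1$ justifying that the coefficient-reversal is still a polynomial, are small points the paper leaves implicit.
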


\begin{proof}
Since $f$ is Morse--Bott, so is $-f$.
The critical point sets of $f$ are decomposed into critical submanifolds as follows:
\[
	I(f)= \bigsqcup_{j=1}^{\ell}C_{j},\quad
	N(f)= \bigsqcup_{s=1}^{\ell^{N}} \Gamma_{s},\quad
	D(f)= \bigsqcup_{u=1}^{\ell^{D}} \Delta_{u}.
\]
Then the critical point sets of $-f$ are
\[
	I(-f)= \bigsqcup_{j=1}^{\ell}C_{j},\quad
	N(-f)= \bigsqcup_{u=1}^{\ell^{D}} \Delta_{u},\quad
	D(-f)= \bigsqcup_{s=1}^{\ell^{N}} \Gamma_{s}.
\]
Moreover, for $j=1,\ldots,\ell$ and $u=1,\ldots,\ell_D$ we have
\begin{align*}
	\ind_{-f}C_j&=\dim M-\dim C_j-\ind_f C_j=m-c_j-\lambda_j,\\
	\ind_{-f|_{\partial M}}\Delta_u&=\dim \partial M-\dim \Delta_u-\ind_{f|_{\partial M}} \Delta_u=m-d_u^D-(\mu_u^D+1).
\end{align*}
Theorem \ref{theorem:main} implies that there exists a polynomial $\mathcal{Q}(t)$ with non-negative integer coefficients such that
\[
	\mathcal{MB}^{N}_{t}(-f) - \mathcal{P}_{t}(M;\mathbb{Z})= (1+t)\mathcal{Q}(t).
\]
Since the negative normal bundles are all oriented, $\mathcal{MB}^{N}_{t}(-f)$ is computed as
\begin{align*}
	\mathcal{MB}_{t}^{N}({-f})
	&= \sum_{j=1}^{\ell} \mathcal{P}_{t}(C_{j};\mathbb{Z})\, t^{\ind_{-f} C_{j}} + \sum_{u=1}^{\ell^D} \mathcal{P}_{t}(\Delta_{u};\mathbb{Z})\, t^{\ind_{-f|_{\partial M}} \Delta_{u}} \\
	&= \sum_{j=1}^{\ell} \mathcal{P}_{t}(C_{j};\mathbb{Z})\, t^{m-c_j-\lambda_j} + \sum_{u=1}^{\ell^D} \mathcal{P}_{t}(\Delta_{u};\mathbb{Z})\, t^{m-d_u^D-(\mu_u^D+1)} \\
	&= t^{m} \left\{ \sum_{j=1}^{\ell} \mathcal{P}_{t}(C_{j};\mathbb{Z})\, t^{-c_{j}} \left(\frac{1}{t} \right)^{\lambda_{j}}
		+ \sum_{u=1}^{\ell^D} \mathcal{P}_{t}(\Delta_{u};\mathbb{Z})\, t^{-d_{u}^D} \left(\frac{1}{t} \right)^{\mu_{u}^D+1} \right\} \\
	&= t^{m} \left\{ \sum_{j=1}^{\ell} \mathcal{P}_{1/t}(C_{j};\mathbb{Z}) \left(\frac{1}{t} \right)^{\lambda_{j}}
		+ \sum_{u=1}^{\ell^D} \mathcal{P}_{1/t}(\Delta_{u};\mathbb{Z}) \left(\frac{1}{t} \right)^{\mu_{u}^D+1} \right\}\\
	&= t^{m} \mathcal{MB}_{1/t}^D(f).
\end{align*}
Here we used the fact that the critical submanifolds are all oriented and hence we have
\begin{align*}
	\mathcal{P}_{t}(C_{j};\mathbb{Z})\, t^{-c_{j}}
	&= \left(\sum_{k=0}^{c_{j}} \rank_{\mathbb{Z}}H_{k}(C_{j};\mathbb{Z})\, t^{k} \right) t^{-c_{j}}
	= \sum_{k=0}^{c_{j}} \rank_{\mathbb{Z}}H_{k}(C_{j};\mathbb{Z}) \left(\frac{1}{t} \right)^{c_{j}-k} \\
	&= \sum_{k=0}^{c_{j}} \rank_{\mathbb{Z}}H_{c_{j}-k}(C_{j};\mathbb{Z}) \left(\frac{1}{t} \right)^{c_{j}-k}
	= \mathcal{P}_{1/t}(C_{j};\mathbb{Z}).
\end{align*}
Similarly we have $\mathcal{P}_{t}(\Delta_{u};\mathbb{Z})\, t^{-d_{u}^D}=\mathcal{P}_{1/t}(\Delta_{u};\mathbb{Z})$.

Therefore we obtain
\begin{align*}
	t^{m} \mathcal{MB}^D_{1/t}(f) - \mathcal{P}_{t}(M;\mathbb{Z})&= (1+t)\mathcal{Q}(t), \\
	\frac{1}{t^{m}} \mathcal{MB}^D_{t}(f) - \mathcal{P}_{1/t}(M;\mathbb{Z})&= \left(1+\frac{1}{t} \right)\mathcal{Q} \left(\frac{1}{t} \right), \\
	\mathcal{MB}^D_{t}(f) - t^{m} \mathcal{P}_{1/t}(M;\mathbb{Z})&= (1+t)t^{m-1} \mathcal{Q} \left(\frac{1}{t} \right).
\end{align*}
Note that the polynomial $t^{m-1} \mathcal{Q} (1/t)$ is a polynomial with non-negative integer coefficients.
Since $M$ is oriented, Lefschetz duality shows that
\begin{align*}
	t^{m} \mathcal{P}_{1/t}(M;\mathbb{Z})&= t^{m} \sum_{k=0}^{m}\ \rank_{\mathbb{Z}}H_{k}(M;\mathbb{Z}) \left(\frac{1}{t} \right)^{k} \\
	&= \sum_{k=0}^{m}\ \rank_{\mathbb{Z}}H^{m-k}(M,\partial M; \mathbb{Z})\, t^{m-k}
	= \mathcal{P}_{t}(M,\partial M).\qedhere
\end{align*}
\end{proof}

The perturbation technique of Section \ref{section:3} is nothing but the \textit{morsification} of $f$ along its critical set.
Then, one reduces to the known case of a Morse function.
Of course, there is still some work to do, since the main theorem is stated in terms of the Bott critical sets.
This is done in the last four pages of the paper.

%%%%%%%%%%%%%%%%%%%%%%%%%%%%%%%%%%%%%%%%%%%%%%%%%%%%%%%%%%%%%%%%%%%%%%%%%%%
%%%%%%%%%%%%%%%%%%%%%%%%%%%%%%%%%%%%%%%%%%%%%%%%%%%%%%%%%%%%%%%%%%%%%%%%%%%
%%%%%%%%%%%%%%%%%%%%%%%%%%%%%%%%%%%%%%%%%%%%%%%%%%%%%%%%%%%%%%%%%%%%%%%%%%%
%%%%%%%%%%%%%%%%%%%%%%%%%%%%%%%%%%%%%%%%%%%%%%%%%%%%%%%%%%%%%%%%%%%%%%%%%%%
%%%%%%%%%%%%%%%%%%%%%%%%%%%%%%%%%%%%%%%%%%%%%%%%%%%%%%%%%%%%%%%%%%%%%%%%%%%

\section{Proof of Main Theorem}\label{section:3}

Let $M$ be an $m$-dimensional compact manifold with boundary and $f$ a Morse--Bott function on $M$.

\subsection{Adapted pseudo-gradient vector fields of Bott-type}

Let $X\in \mathfrak{X}(M)$ be a vector field on $M$, i.e.,
a smooth section $X\colon M\to TM$ in the tangent bundle.
For any zero point $p$ of $X$, the differential $DX$ of the smooth map $X$ defines a vector field $DX(p)\colon T_p M\to T_{X(p)}TM$.
Let $\phi\colon T_{X(p)}TM\to T_pM$ be an isomorphism.
We call $X_p^{\mathrm{lin}}=\phi\circ DX(p)\colon T_pM\to T_pM$ the \textit{linear part} of $X$ at $p$.
Let $C$ be a submanifold consisting of zero points of $X$.
$C$ is called a \textit{zero submanifold}.
The tangent bundle $\left.TM\right|_{C}$ splits as
$\left.TM\right|_{C}= TC\oplus \nu C$
where $\nu C$ is the normal bundle of $C$ in $M$.
For all $p\in C$ and $V\in T_{p}C$, we have $DX(p)(V)=0$.
Hence the linear part $X_p^{\mathrm{lin}}$ of $X$ at $p$ induces the linear transformation
$\bigl(X_p^{\mathrm{lin}}\bigr)^{\nu}\colon \nu_{p}C\to \nu_{p}C$.

\begin{definition}[\cite{AR}]
A zero submanifold $C$ of $X$ is said to be \textit{transversely hyperbolic}
if for each $p\in C$, the linear transformation $\bigl(X_p^{\mathrm{lin}}\bigr)^{\nu}$
has no pure imaginary eigenvalues.
\end{definition}

\begin{definition}\label{definition:pseudoBott}
A vector field $X\in \mathfrak{X}(M)$ is called a \textit{pseudo-gradient vector field of Bott-type}
for the Morse--Bott function $f$ \textit{adapted} to the boundary
if $X$ satisfies the following conditions:
\begin{enumerate}
	\item $Xf<0$ except on the interior critical submanifolds and the boundary critical submanifolds of type $N$ of $f$.
	\item $X$ points inwards along the boundary except a neighborhood of the boundary critical submanifolds of type $N$
			where it is tangent to $\partial M$.
	\item For each interior critical submanifold $C\subset I(f)$, $C$ is a transversely hyperbolic zero submanifold of $X$, and
			for all $p\in C$, the Hessian of the Lie derivative $X\cdot f$ restricted to the normal space $\nu_p C$
			is negative definite.
	\item For each boundary critical submanifold $\Gamma\subset N(f)$ of type $N$ and $\gamma\in \Gamma$,
			there exist local coordinates $x=(r,u,y)\in [0,1)\times \mathbb{R}^{\dim \Gamma}\times \mathbb{R}^{m-\dim \Gamma-1}$
			of $M$ around $\gamma$ such that $M=\{r\geq 0 \}$
			and $f(x)=f(\Gamma)+q(y)+r$ where $q$ is a non-degenerate quadratic form.
			Moreover, in these coordinates, $X$ is tangent to the boundary, $\Gamma$ is a transversely hyperbolic zero submanifold of $X$, and
			the Hessian of the Lie derivative $X\cdot f$ restricted to the normal bundle $\nu \Gamma$ is negative definite.
\end{enumerate}
\end{definition}

\begin{remark}
The conditions (1) and (2) are exactly the same as \cite{Laudenbach}.
However, in our setting, since the critical point set might be positive dimensional,
the conditions (3) and (4) are certain generalizations of those of \cite{Laudenbach}.
\end{remark}

\begin{proposition}\label{proposition:pseudoBott}
For any Morse--Bott function $f$, there exists an adapted pseudo-gradient vector field of Bott-type $X\in \mathfrak{X}(M)$ for $f$.
\end{proposition}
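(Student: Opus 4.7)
The plan is a local-to-global construction: define $X$ by explicit models on tubular neighbourhoods of each critical submanifold, pick an auxiliary vector field on the complement, and glue the pieces with a partition of unity. Throughout I would fix a Riemannian metric on $M$ having product form $dr^{2}+g_{\partial}$ on a collar $[0,\varepsilon)\times\partial M$.

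The three kinds of critical submanifold call for different local models. Near an interior critical submanifold $C_{j}$, the Morse--Bott lemma provides coordinates in which $f=f(C_{j})+\tfrac{1}{2}\langle H_{j}v,v\rangle$ with $v$ the normal fibre coordinate; I would take $X^{C_{j}}:=-\grad f$, so that $X^{C_{j}}$ vanishes exactly on $C_{j}$, its normal linear part equals $-H_{j}$ (no pure-imaginary eigenvalues by the Morse--Bott condition), and $X^{C_{j}}\cdot f=-|\grad f|^{2}$ has Hessian $-2H_{j}^{2}$ on $\nu C_{j}$. Near a type $D$ submanifold $\Delta_{u}$ the same choice $X^{\Delta_{u}}:=-\grad f$ points strictly inwards (since $\langle df,n\rangle>0$) and satisfies $X^{\Delta_{u}}\cdot f<0$. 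Near a type $N$ submanifold $\Gamma_{s}$, I would first apply the Morse--Bott lemma to $f|_{\partial M}$ to get coordinates $(u,y)$ with $f|_{\partial M}=f(\Gamma_{s})+q(y)$, extend them through the collar as $(r,u,y)$, Taylor-expand to $f=f(\Gamma_{s})+q(y)+r\,h(r,u,y)$, and use that $h(0,u,0)=\partial_{r}f(0,u,0)>0$ by the type $N$ hypothesis to perform the change of variable $\tilde r:=r\,h(r,u,y)$; by the implicit function theorem this is a diffeomorphism and puts $f$ into the normal form $f=f(\Gamma_{s})+q(y)+\tilde r$ required by condition~(4). In these coordinates I would set
\[
X^{\Gamma_{s}}:=-\tilde r\,\partial_{\tilde r}-\nabla_{y}q,
\]
which is boundary-tangent, vanishes exactly on $\Gamma_{s}$, has normal linear part with spectrum $\{-1\}\cup\operatorname{spec}(-\operatorname{Hess}q)$, and whose Lie derivative $X^{\Gamma_{s}}\cdot f=-\tilde r-|\nabla_{y}q|^{2}$ has negative-definite Hessian $-2(\operatorname{Hess}q)^{2}$ on the directions transverse to $\Gamma_{s}$ inside $\partial M$.

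On the complement of slightly smaller neighbourhoods of all the critical submanifolds $df$ never vanishes, and I would cover that region by finitely many charts on each of which a vector field $Y_{\alpha}$ with $Y_{\alpha}\cdot f<0$ and inward-pointing wherever the chart meets $\partial M$ is easy to construct (for instance $-\grad f$ in the interior, with a boundary-adapted modification at those boundary points where $-\grad f$ itself fails to point inwards). With a plateau-type partition of unity $\{\rho_{C_{j}},\rho_{\Gamma_{s}},\rho_{\Delta_{u}},\rho_{\alpha}\}$ in which each $\rho$ indexed by a critical submanifold equals $1$ on a smaller neighbourhood of that submanifold, I would set
\[
X:=\sum_{j}\rho_{C_{j}}X^{C_{j}}+\sum_{s}\rho_{\Gamma_{s}}X^{\Gamma_{s}}+\sum_{u}\rho_{\Delta_{u}}X^{\Delta_{u}}+\sum_{\alpha}\rho_{\alpha}Y_{\alpha}.
\]
Condition (1) follows because every summand has non-positive Lie derivative with joint zero set exactly $I(f)\cup N(f)$; condition~(2) is preserved under positive convex combinations of vector fields that are inward-pointing (respectively boundary-tangent near type $N$); and near each critical submanifold the associated $\rho$ equals $1$, so $X$ coincides with the local model there, giving conditions~(3) and (4).

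The hardest part is the type $N$ step: producing coordinates that simultaneously put $f$ into Morse--Bott form on $\partial M$ and into affine form in the normal direction, and exhibiting a vector field in those coordinates that is boundary-tangent, has $\Gamma_{s}$ as a transversely hyperbolic zero submanifold, and whose Lie derivative has the required negative-definite Hessian on the appropriate normal bundle. Once the local models are in place the patching argument is routine, and the interior and type $D$ constructions reduce to standard pseudo-gradient and Morse--Bott considerations.
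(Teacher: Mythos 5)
Your construction — explicit local models near each critical submanifold (Morse--Bott lemma plus the change of variable $\tilde r=r\,h(r,u,y)$ for the type~$N$ case), an auxiliary vector field on the complement, and a plateau partition of unity — is precisely the Laudenbach-style patching argument that the paper invokes by reference, with your change-of-variable step being exactly the content of the paper's remark that the local coordinates of condition~(4) come from the Morse--Bott Lemma. The proposal is correct and essentially identical in approach to the paper's.
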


\begin{proof}
The proof is similar to that of \cite[Proposition of Subsection 2.1]{Laudenbach}.
The existence of local coordinates of (4) follows from the Morse--Bott Lemma \cite{Bott}.
\end{proof}

%%%%%%%%%%%%%%%%%%%%%%%%%%%%%%%%%%%%%%%%%%%%%%%%%%%%%%%%%%%%%%%%%%%%%%%%%%%
%%%%%%%%%%%%%%%%%%%%%%%%%%%%%%%%%%%%%%%%%%%%%%%%%%%%%%%%%%%%%%%%%%%%%%%%%%%
%%%%%%%%%%%%%%%%%%%%%%%%%%%%%%%%%%%%%%%%%%%%%%%%%%%%%%%%%%%%%%%%%%%%%%%%%%%

\subsection{Perturbation technique \cite{BH}}\label{subsection:pt}

In this subsection, we use the same notation as in Section \ref{section:maintheorem}.
We identify a collar neighborhood of the boundary $\partial M$ with $[0,1)\times \partial M$.
Let $r$ be the standard coordinate of $[0,1)$.
Fix an adapted pseudo-gradient vector field of Bott-type $X_{f}\in \mathfrak{X}(M)$
for the Morse--Bott function $f$ (Proposition \ref{proposition:pseudoBott}).
For all $j=1,\ldots,\ell$ and $s=1,\ldots,\ell_N$, we choose open neighborhoods $U_{j}\subset \Int M$ of $C_{j}$
and $U_{s}^{N}\subset \partial M$ of $\Gamma_{s}$ such that
\begin{equation}
	\left.X_{f}\right|_{U_{j}}= -\left.(\grad f)\right|_{U_{j}}
	\quad \text{and}\quad \left.X_{f}\right|_{U_{s}^{N}}= -\left.(\grad f|_{\partial M})\right|_{U_{s}^{N}}.\label{eq:1}
\end{equation}
By the condition (4) of Definition \ref{definition:pseudoBott},
we can choose a positive real number $0<\delta<1/2$ small enough so that for any $s=1,\ldots,\ell_N$ and $\gamma\in \Gamma_{s}$,
there exist local coordinates
$(r,u,y)\in [0,1)\times \mathbb{R}^{\dim \Gamma_{s}}\times \mathbb{R}^{m-\dim \Gamma_{s}-1}$ of $M$ around $\gamma$
and a non-degenerate quadratic form $q_{s}$ such that
\begin{align}
	f|_{[0,2\delta)\times U_{s}^{N}}&= f(\Gamma_{s})+q_{s}(y)+r,\label{eq:2}\\
	\left.X_{f}\right|_{[0,2\delta)\times U_{s}^{N}}&= -\left.(\grad f|_{\partial M})\right|_{U_{s}^{N}} - r\frac{\partial}{\partial r}.\label{eq:3}
\end{align}
For all $j$ and $s$, let $T_{j}\subset \Int M$ be a tubular neighborhood of $C_{j}$ of radius $\delta_{j}$ and
$T_{s}^{N}\subset \partial M$ a tubular neighborhood of $\Gamma_{s}$ which satisfy the following conditions:
\begin{enumerate}
	\item For each $j$ and $s$, we have $T_{j}\subset U_{j}$ and $T_{s}^{N}\subset U_{s}^{N}$.
	\item For each $j$ and $s$, $T_{j}$ and $T_{s}^{N}$ are contained in the union of the charts from the Morse--Bott Lemma \cite{Bott}.
	\item For distinct $i$ and $j$, we have $T_{i}\cap T_{j}=\emptyset$ and $T_{i}^{N}\cap T_{j}^{N}=\emptyset$.
	\item For every pseudo-gradient flow line (i.e., flow line of $X_{f}$) $\gamma\colon [0,1]\to M$ from $T_{i}$ to $T_{j}$, we have
		\begin{align*}
			&f\bigl(\gamma(0)\bigr)-f\bigl(\gamma(1)\bigr)\\
			\geq &3\max \left\{\, \var (f,T_{1}),\ldots, \var (f,T_{\ell}), \var (f,T_{1}^{N}),\ldots, \var (f,T_{\ell_N}^{N})\, \right\}.
		\end{align*}
		Moreover, the similar conditions hold in the cases of ``from $T_{i}$ to $T_{j}^{N}$'',
		``from $T_{i}^{N}$ to $T_{j}$'' and ``from $T_{i}^{N}$ to $T_{j}^{N}$''.
	\item If $f(C_{i})\neq f(C_{j})$, then
		$\var (f,T_{i}) + \var (f,T_{j})< 1/3\,\bigl\lvert f(C_{i})-f(C_{j}) \bigr\rvert$.
		Moreover, the similar conditions hold in the cases of $f(\Gamma_{i})\neq f(\Gamma_{j})$ and $f(C_{j})\neq f(\Gamma_{s})$.
	\item For all $j$, we have $\bigl((0,1)\times \partial M \bigr)\cap T_{j} = \emptyset$.
\end{enumerate}
Similarly, for all $u=1,\ldots,\ell_D$, let $T_{u}^{D}\subset \partial M$
be a tubular neighborhood of $\Delta_{u}$ satisfying the above conditions (2) and (3).

By the Kupka--Smale Theorem \cite{Kupka, Smale}, for each $j$, $s$ and $u$, we can pick positive Morse functions
$f_{j}\colon C_{j}\to \mathbb{R}$,\: $f_{s}^{N}\colon \Gamma_{s}\to \mathbb{R}$ and $f_{u}^{D}\colon \Delta_{u}\to \mathbb{R}$
such that their gradient vector fields satisfy the Morse--Smale condition, i.e.,
all the unstable manifolds and the stable manifolds intersect transversely.
We extend these functions to functions on $T_{j}$, $T_{s}^{N}$ and $T_{u}^{D}$
by making constant in the direction normal to $C_{j}$, $\Gamma_{s}$ and $\Delta_{u}$ respectively.
For all $j=1,\ldots,\ell$, let $\tilde{T}_{j}\subset T_{j}$ be a smaller tubular neighborhood of $C_{j}$
of radius $\tilde{\delta}_{j}\ (<\delta_{j})$ in $\Int M$.

Moreover, let $\hat{\rho}_{j}\colon [0,\infty )\to [0,1]$ be a $C^{\infty}$-function satisfying
\begin{align*}
\hat{\rho}_j &=%
\begin{cases}
1 & \text{on\: $[0,\tilde{\delta}_{j})$},\\
0 & \text{outside of\: $[0,\delta_{j})$},
\end{cases}\\
\hat{\rho}_j&>0\quad \text{on\: $[0,\delta_{j})$}
\end{align*}
and we define a bump function $\rho_{j}$ by the formula $\rho_{j}(x)=\hat{\rho}_{j}\bigl(d(x,C_{j})\bigr)$
where $d$ is the distance induced from a metric on $M$.
Now we choose $\varepsilon_{1}>0$ small enough so that for all $j=1,\ldots,\ell$,
\[
	\sup_{T_{j}\setminus \tilde{T}_{j}} \varepsilon_{1}\big\| \grad (\rho_{j}f_{j})\big\|%
	< \inf_{T_{j}\setminus \tilde{T}_{j}} \big\| \grad f\big\| \neq 0.
\]
Similarly, for every $s$ and $u$, we choose $\tilde{T}_{s}^{N}$, $\tilde{T}_{u}^{D}$,
$\rho_{s}^{N}$, $\rho_{u}^{D}$, $\varepsilon_{1}^{N}$ and $\varepsilon_{1}^{D}$.
For each $s$ and $u$, we extend $f_{s}^{N}$ and $f_{u}^{D}$ to functions on $[0,1)\times T_{s}^{N}$
and $[0,1)\times T_{u}^{D}$ by making constant in the $r$-coordinate, respectively.
Similarly, we extend $\rho_{s}^{N}$ and $\rho_{u}^{D}$ to functions on $[0,1)\times \partial M$.

On the other hand, let $\hat{\rho}\colon [0,\infty )\to [0,1]$ be a $C^{\infty}$-function satisfying
\begin{align*}
\hat{\rho} &=%
\begin{cases}
1 & \text{on\: $[0,2\delta )$}, \\
0 & \text{outside of\: $[0,1)$},
\end{cases}\\
\hat{\rho}&>0\quad \text{on\: $[0,1)$}
\end{align*}
and we define a bump function $\rho$ by the formula $\rho(r,y)=\hat{\rho}(r)$ where $(r,y)\in [0,1)\times \partial M$.
By the condition (1) of Definition \ref{definition:MBwb}, 
we can choose $\varepsilon_{2}^{N}>0$ small enough so that for all $s=1,\ldots,\ell_N$,
\[
	\sup_{[0,1)\times T_{s}^{N}} \varepsilon_{2}^{N}\big\| \grad \bigl(\rho \rho_{s}^{N}f_{s}^{N}\bigr)\big\|%
	< \inf_{[0,1)\times T_{s}^{N}} \big\| \grad f\big\| \neq 0.
\]
We can choose $\varepsilon_{2}^{D}$ satisfying a similar condition.

Lastly, we set
$\varepsilon = \min \{\, \varepsilon_{1}, \varepsilon_{1}^{N}, \varepsilon_{1}^{D}, \varepsilon_{2}^{N}, \varepsilon_{2}^{D} \, \}$
and define a $C^{\infty}$-function on $M$ by
\[
	h= f + \varepsilon \left\{ \sum_{j=1}^{\ell}\rho_{j}f_{j}%
		+ \rho \left(\sum_{s=1}^{\ell_N}\rho_{s}^{N}f_{s}^{N} + \sum_{u=1}^{\ell_D}\rho_{u}^{D}f_{u}^{D} \right) \right\}.
\]

The proof of the following lemma is straightforward:

\begin{lemma}\label{lemma:main}
The function $h\colon M\to \mathbb{R}$ is Morse in the sense of Laudenbach {\rm \cite{Laudenbach}}.
For all $n=0,1,\ldots,m$,
\[
	I_{n} (h)= \bigsqcup_{\lambda_{j}+k=n}\Cr_{k} (f_{j}),
	\quad N_{n} (h)= \bigsqcup_{\mu_{s}^{N}+k=n}\Cr_{k} (f_{s}^{N})\\
\]
and
\[
	\quad D_{n} (h)= \bigsqcup_{\mu_{u}^{D}+k=n}\Cr_{k} (f_{u}^{D}).
\]
\end{lemma}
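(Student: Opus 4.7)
The plan is to localize: since $h$ coincides with $f$ outside the supports of the cutoffs, the critical analysis decomposes over the (mutually disjoint) tubes. I would split $M$ into three regions---(a) the thinner tubes $\tilde T_j$ and collar slabs $[0,2\delta)\times\tilde T_s^N$, $[0,2\delta)\times\tilde T_u^D$, where the cutoffs $\rho_j,\rho\rho_s^N,\rho\rho_u^D$ are identically $1$; (b) the transition annuli $T_j\setminus\tilde T_j$ and their collar analogues, where the perturbation is present but dominated in norm by $\grad f$; and (c) the complement, where $h=f$---and analyze each in turn. On (c) there are no critical points, since every component of $\Cr(f)\cup\Cr(f|_{\partial M})$ is contained in some tube. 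On (b) the defining inequality $\varepsilon\|\grad(\rho_j f_j)\|<\|\grad f\|$, together with the analogous bounds involving $\varepsilon_1^N,\varepsilon_1^D,\varepsilon_2^N,\varepsilon_2^D$, prevents $\grad h$ from vanishing in the interior, excludes new boundary critical points, and preserves the sign of $\langle dh,n\rangle$, so the labels $N$ and $D$ carry over from $f$.

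On an interior region (a) around $C_j$, the Morse--Bott lemma \cite{Bott} furnishes local coordinates $(x,y)$ with $x\in C_j$ and $y$ in the normal direction, in which $f=f(C_j)+Q_j(y)$ and $Q_j$ is a non-degenerate quadratic form of index $\lambda_j$. Because $f_j$ was extended as a constant along $y$, we have $h=f(C_j)+Q_j(y)+\varepsilon f_j(x)$, so the critical points are exactly $\{(p,0)\mid p\in\Cr(f_j)\}$, and the block-diagonal Hessian $\varepsilon\,\mathrm{Hess}\,f_j\oplus\mathrm{Hess}\,Q_j$ is non-degenerate of Morse index $\lambda_j+k$ when $p\in\Cr_k(f_j)$. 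This gives $I_n(h)=\bigsqcup_{\lambda_j+k=n}\Cr_k(f_j)$.

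For the boundary case at $\Gamma_s$, I would use the coordinates $(r,u,y)$ furnished by condition (4) of Definition \ref{definition:pseudoBott}, in which $f=f(\Gamma_s)+q_s(y)+r$, so that on the slab where all cutoffs equal $1$ we have $h=f(\Gamma_s)+q_s(y)+r+\varepsilon f_s^N(u)$. The equations $dh|_{\partial M}=0$ reduce to $y=0$ and $df_s^N(u)=0$, hence the boundary critical points are exactly $\Cr(f_s^N)$ with restricted-Hessian index $\mu_s^N+k$, and $\partial h/\partial r\equiv 1>0$ confirms type $N$; an entirely symmetric argument at $\Delta_u$ handles type $D$. The main difficulty is really the bookkeeping at the boundary---simultaneously verifying non-degeneracy of $h|_{\partial M}$, the correct sign of the normal derivative, and the absence of spurious critical points in transition regions---which is precisely why the nested tubes $\tilde T\subset T\subset U$ and the layered choices of $\varepsilon_1,\varepsilon_1^{N,D},\varepsilon_2^{N,D}$ were arranged in advance.
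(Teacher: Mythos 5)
Your proposal is correct and it supplies exactly the localization argument that the paper labels ``straightforward'' without writing out: partition $M$ into the inner regions where the cutoffs equal one, the transition annuli controlled by the gradient-domination inequalities defining $\varepsilon_1,\varepsilon_1^{N},\varepsilon_1^{D},\varepsilon_2^{N},\varepsilon_2^{D}$, and the complement where $h=f$; then read off the critical set and indices from the Morse--Bott normal form (block-diagonal Hessian in the interior case, and the collar form $f(\Gamma_s)+q_s(y)+r$ with $\partial h/\partial r\equiv 1$ confirming type $N$ at the boundary). This is the same approach the paper intends, just made explicit.
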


Since the number of critical points is finite,
we can choose $\tilde{\delta}>0$ small enough so that
for all $s=1,\ldots,\ell_N$ and $\gamma\in N(h)\cap \Gamma_s$,
the open ball $B_{4\tilde{\delta}}(\gamma)\subset T_s^N$ of $\gamma$ of radius $4\tilde{\delta}$ contains no other critical points.
Let $\hat{\tilde{\rho}}\colon [0,\infty )\to [0,1]$ be a $C^{\infty}$-function satisfying
\begin{align*}
\hat{\tilde{\rho}} &=%
\begin{cases}
0 & \text{on\: $[0,\tilde{\delta})$}, \\
1 & \text{outside of\: $[0,2\tilde{\delta})$},
\end{cases}\\
\hat{\tilde{\rho}}&>0\quad \text{on\: $[0,2\tilde{\delta})$}
\end{align*}
and we define a bump function $\tilde{\rho}$ by the formula $\tilde{\rho}(y)=\hat{\tilde{\rho}}\Bigl(d\bigl(y,N(h)\bigr)\Bigr)$
where $y\in \partial M$.
Then we extend $\tilde{\rho}$ to a function on $[0,1)\times \partial M$ by making constant in the $r$-coordinate.

Now let us choose $\varepsilon>0$ small enough so that for all $s=1,\ldots,\ell_N$,
\begin{equation}
	\sup_{[0,2\delta)\times \left(T_{s}^{N}\setminus B_{\tilde{\delta}}^s\right)}%
	\varepsilon\delta\left\lvert \left(\grad (\tilde{\rho}r)\right)h\right\rvert%
	< \inf_{T_{s}^{N}\setminus B_{\tilde{\delta}}^s} \big\| \grad h|_{\partial M}\big\|^2 \neq 0\label{eq:4}
\end{equation}
and
\begin{align}
	&\sup_{[2\delta,1)\times T_{s}^{N}} \varepsilon\left\lvert X_{f} \bigl(\rho \rho_{s}^{N}f_{s}^{N}\bigr)%
		- (\grad f)\bigl(\rho \rho_{s}^{N}f_{s}^{N}\bigr) + \delta\left(\grad (\rho\tilde{\rho}r)\right)h \right\rvert\notag\\
	<&\inf_{[2\delta,1)\times T_{s}^{N}} \left\lvert X_{f} f \right\rvert \neq 0,\label{eq:5}
\end{align}
where
\[
	B_{\tilde{\delta}}^s=\bigcup_{\gamma\in N(h)\cap \Gamma_s}B_{\tilde{\delta}}(\gamma).
\]
We define a $C^{\infty}$-vector field on $M$ by
\[
	G= X_{f}-\varepsilon \grad \left\{ \sum_{j=1}^{\ell}\rho_{j}f_{j}%
		+ \rho \left(\sum_{s=1}^{\ell_N}\rho_{s}^{N}f_{s}^{N} + \sum_{u=1}^{\ell_D}\rho_{u}^{D}f_{u}^{D}%
		-\delta\tilde{\rho}r\right) \right\}.
\]

\begin{lemma}
The vector field $G$ is an adapted pseudo-gradient vector field for the Morse function $h$.
\end{lemma}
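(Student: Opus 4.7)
The plan is to verify the four conditions of Definition~\ref{definition:pseudoBott}, specialized to the Morse case (which recovers Laudenbach's notion of an adapted pseudo-gradient for a Morse function on a manifold with boundary), for the pair $(h, G)$. By Lemma~\ref{lemma:main} the critical set of $h$ is the finite set $\Cr(f_j) \cup \Cr(f_s^N) \cup \Cr(f_u^D)$, all of whose points lie inside some $\tilde T_j$, $\tilde T_s^N$, or $\tilde T_u^D$, where the relevant bump functions equal~$1$. The verification is a region-by-region analysis driven by the supports of $\rho_j$, $\rho_s^N$, $\rho_u^D$, $\rho$, and $\tilde\rho$, together with the chain of smallness choices for $\varepsilon$.

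For condition~(1), that $Gh < 0$ off $\Cr(h)$, I would first handle the ``cores'' and then the transition zones. On $\tilde T_j$, condition~(6) in the choice of tubular neighborhoods gives $\rho \equiv 0$ and $\rho_j \equiv 1$, so by~(\ref{eq:1}) we have $G = -\grad h$ and $Gh = -\|\grad h\|^{2} \le 0$, vanishing exactly on $\Cr(f_j)$. On $[0, 2\delta) \times \tilde T_s^N$, $\rho = \rho_s^N = 1$ and $\tilde\rho \equiv 0$ near $N(h)$, so~(\ref{eq:2})--(\ref{eq:3}) yield $G = -\grad(h|_{\partial M}) - r\,\partial/\partial r$ and $Gh = -\|\grad(h|_{\partial M})\|^{2} - r \le 0$, vanishing exactly on $\Cr(f_s^N)$; the type-$D$ cores are analogous. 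Outside every tubular neighborhood and every active part of the collar, $G$ is close to $X_f$ and $h$ to $f$, so $Gh$ is dominated by $X_f f < 0$. The remaining regions are the transition zones $T_j \setminus \tilde T_j$, $T_s^N \setminus \tilde T_s^N$, $T_u^D \setminus \tilde T_u^D$, and the outer collar $[2\delta, 1) \times T_s^N$; there the cross terms produced by expanding $Gh$ are absorbed into the strictly negative dominant term exactly by the construction of $\varepsilon_1$, $\varepsilon_1^N$, $\varepsilon_1^D$, $\varepsilon_2^N$, $\varepsilon_2^D$, and of the estimates~(\ref{eq:4})--(\ref{eq:5}).

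For conditions~(2)--(4) I would examine the local structure of $G$ and $h$. For~(2), at $r = 0$ the facts that $\hat\rho \equiv 1$ on $[0, 2\delta)$ (so $\rho'(0) = 0$) and $\grad(\rho\tilde\rho r)|_{r = 0} = \tilde\rho(y)\,\partial/\partial r$ imply that the gradient-type perturbations contribute only tangential components to $G$ at the boundary, while the $-\delta\tilde\rho r$ correction contributes the inward component $\varepsilon\delta\tilde\rho(y)\,\partial/\partial r$; combined with the boundary behavior of $X_f$ from Definition~\ref{definition:pseudoBott}, this forces $G$ to be tangent to $\partial M$ precisely on $N(h)$ and strictly inward elsewhere. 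For~(3), on $\tilde T_j$ the identity $G = -\grad h$ and the Morse--Bott Lemma applied to $f$ around $C_j$ deliver the transverse hyperbolicity and the negative-definite Hessian of $G \cdot h$ on the normal fibres. For~(4), at $p \in \Cr(f_s^N) \subset \tilde T_s^N$, the identifications $\rho = \rho_s^N = 1$ and $\tilde\rho = 0$ together with~(\ref{eq:2}) give $h = h(p) + \varepsilon Q_p(u) + q_s(y) + r$, where $Q_p$ is the non-degenerate Hessian of $f_s^N$ at $p$; this is exactly Laudenbach's required normal form for a type-$N$ critical point of a Morse function, and $G = -\grad(h|_{\partial M}) - r\,\partial/\partial r$ is the associated adapted pseudo-gradient model.

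The principal obstacle is the outer collar $[2\delta, 1) \times T_s^N$: no coordinate simplification of $f$ or $X_f$ is available there, $\rho$ transitions from~$1$ to~$0$, and the sign of $Gh$ must be controlled by the single estimate~(\ref{eq:5}) against a combination of three distinct perturbation contributions. This region is precisely what motivates both the introduction of the $-\delta\tilde\rho r$ summand into $G$ and the use of two separate radius scales $\tilde\delta < \delta$ in the construction of $\tilde\rho$. Elsewhere the argument is careful bookkeeping organized by bump-function support.
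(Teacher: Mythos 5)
Your overall strategy---verify the conditions of the adapted-pseudo-gradient definition region by region, with the bump-function supports organising the cases and the smallness of $\varepsilon$ absorbing cross terms---matches the paper's proof. But there are two genuine problems.

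First, you locate the ``core'' of the boundary analysis in the wrong place. You claim that on $[0,2\delta)\times\tilde T_s^N$ one has $\tilde\rho\equiv 0$ and hence $G=-\grad(h|_{\partial M})-r\,\partial/\partial r$. That is false: $\tilde\rho(y)=\hat{\tilde\rho}\bigl(d(y,N(h))\bigr)$ vanishes only for $d(y,N(h))<\tilde\delta$, i.e.\ on the finite union of balls $B_{\tilde\delta}^s=\bigcup_{\gamma\in N(h)\cap\Gamma_s}B_{\tilde\delta}(\gamma)$ around the isolated critical points of $h|_{\partial M}$, whereas $\tilde T_s^N$ is a tubular neighbourhood of the whole (generally positive-dimensional) submanifold $\Gamma_s$. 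On $[0,2\delta)\times(\tilde T_s^N\setminus B_{\tilde\delta}^s)$ the term $\varepsilon\delta\grad(\tilde\rho r)$ is present and must be estimated; this is exactly what inequality~\eqref{eq:4} is for, and the paper's case split near the boundary is accordingly $[0,2\delta)\times B_{\tilde\delta}(\gamma)$, then $[0,2\delta)\times(T_s^N\setminus B_{\tilde\delta}^s)$, then $[2\delta,1)\times T_s^N$. Your decomposition into $\tilde T_s^N$ and $T_s^N\setminus\tilde T_s^N$ therefore does not line up with the construction or with the estimates you invoke, and as written the verification of conditions (1), (2), (4) on the intermediate collar region is not covered.

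Second, you verify only conditions (1)--(4) of Definition~\ref{definition:pseudoBott}, but the lemma requires $G$ to be an adapted pseudo-gradient in Laudenbach's sense, whose definition also includes a fifth, Morse--Smale transversality condition on stable and unstable manifolds. The paper handles this by citing Smale's genericity result: after arranging (1)--(4), one perturbs (shrinking $\varepsilon$ if needed) to achieve transversality. Your proposal never mentions this condition, so the argument is incomplete even if the region-by-region analysis were repaired.
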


\begin{proof}
We show that $G$ satisfies the conditions (1)--(5) of the definition of
the adapted pseudo-gradient vector fields of \cite{Laudenbach},
that is those of Definition \ref{definition:pseudoBott} for a Morse function.

The condition (3) is verified as follows:
For $p\in I(h)$, there exists $j=1,\ldots,\ell$ such that $p\in C_{j}\subset T_{j}$.
Since $h=f+\varepsilon \rho_{j}f_{j}$ and $X_{f} = -\grad f$ on $T_{j}$ by \eqref{eq:1}, we have
\begin{equation}
	G= X_{f} - \varepsilon \grad (\rho_{j}f_{j})= -\grad f - \varepsilon \grad (\rho_{j}f_{j})= -\grad h.\label{eq:6}
\end{equation}
Hence the Hessian of the Lie derivative $G\cdot h$ at $p$ is negative definite
since $p$ is a non-degenerate critical point of $h$ by Lemma \ref{lemma:main}.

The condition (4) is verified as follows:
For $\gamma\in N(h)$, there exists $s=1,\ldots,\ell_N$ such that $\gamma\in \Gamma_{s}\subset T_{s}^{N}$.
By Lemma \ref{lemma:main} and \eqref{eq:2}, on $[0,2\delta)\times T_{s}^{N}$,
there exist local coordinates
$x=(r,y)\in [0,1)\times \mathbb{R}^{m-1}$ around $\gamma$ in $M$
and a non-degenerate quadratic form $q$ such that 
\begin{equation}
	h(x)=h(\gamma)+q(y)+r.\label{eq:7}
\end{equation}
By \eqref{eq:3}, on $[0,2\delta)\times T_{s}^{N}$, we obtain
\begin{align}
	G &=X_{f} - \varepsilon \grad \bigl(\rho_{s}^{N}f_{s}^{N}-\delta\tilde{\rho}r\bigr)\notag \\
	&= \left(-\grad f|_{\partial M}-r \frac{\partial}{\partial r}\right)
	- \varepsilon \grad \bigl(\rho_{s}^{N}f_{s}^{N}\bigr)+ \varepsilon\delta \grad \bigl(\tilde{\rho}r\bigr)\notag \\
	&= -\grad h|_{\partial M}- r \frac{\partial}{\partial r}+ \varepsilon\delta \grad \bigl(\tilde{\rho}r\bigr)\label{eq:8}
\end{align}
and in particular, on $[0,2\delta)\times B_{\tilde{\delta}}(\gamma)$, we have
\begin{equation}
	G = -\grad h|_{\partial M}- r \frac{\partial}{\partial r}
	= -\grad q- r \frac{\partial}{\partial r}.\label{eq:9}
\end{equation}
Hence the Hessian of the Lie derivative $G\cdot h$ is negative definite.

The condition (2) is verified as follows:
By \eqref{eq:9}, since $G = -\grad h|_{\partial M}$ on $\{0 \} \times B_{\tilde{\delta}}(\gamma)$,
$G$ is tangent to the boundary on the neighborhood $B_{\tilde{\delta}}(\gamma)$
of a boundary critical point $\gamma$ of type $N$.
On $\partial M\setminus \bigcup_s B_{\tilde{\delta}}^s$, we have
\[
	G=X_{f}-\varepsilon \grad%
		\left(\sum_{s=1}^{\ell_N}\rho_{s}^{N}f_{s}^{N} + \sum_{u=1}^{\ell_D}\rho_{u}^{D}f_{u}^{D}\right)%
		+\delta\left(r\grad\tilde{\rho}+\tilde{\rho}\frac{\partial}{\partial r}\right).
\]
Due to the term $\delta\tilde{\rho}\,\partial/\partial r\neq 0$, $G$ points inwards along the boundary outside of a neighborhood containing $\bigcup_s B_{\tilde{\delta}}^s$.

The verification of the condition (1) divides into five cases:
On a neighborhood $U_{p}\setminus \{ p\}$ of a critical point $p\in I(h)\cap C_j\subset T_j$,
we have $G=-\grad h$ by \eqref{eq:6} and $Gh=-(\grad h)h=-\big\| \grad h \big\|^{2}<0$.
On the neighborhood $[0,2\delta)\times B_{\tilde{\delta}}(\gamma)$ of a critical point $\gamma\in N(h)\cap \Gamma_s\subset T_s^N$,
we have $h=h|_{\partial M}+r$ by \eqref{eq:7}
and
\[
	Gh= \left(-\grad h|_{\partial M}- r \frac{\partial}{\partial r}\right)(h|_{\partial M}+r)
	=-\big\| \grad h|_{\partial M} \big\|^{2}-r<0
\]
by \eqref{eq:9}.
On $[0,2\delta)\times \left(T_s^N\setminus B_{\tilde{\delta}}^s\right)$,
we have
\begin{align*}
	Gh&= \left(-\grad h|_{\partial M}- r \frac{\partial}{\partial r}+ \varepsilon\delta \grad \bigl(\tilde{\rho}r\bigr)\right)(h|_{\partial M}+r)\\
	&=-\big\| \grad h|_{\partial M} \big\|^{2}-r+\varepsilon\delta \left(\grad \bigl(\tilde{\rho}r\bigr)\right)h\\
	&< -\inf_{T_{s}^{N}\setminus B_{\tilde{\delta}}^s} \big\| \grad h|_{\partial M}\big\|^2%
		+\sup_{[0,2\delta)\times \left(T_{s}^{N}\setminus B_{\tilde{\delta}}^s\right)}%
		\varepsilon\delta\left\lvert \left(\grad (\tilde{\rho}r)\right)h\right\rvert<0
\end{align*}
by \eqref{eq:7}, \eqref{eq:8} and \eqref{eq:4}.
On a neighborhood $[2\delta,1)\times U_{\gamma}$ of the critical point $\gamma$,
we have $h= f + \varepsilon \rho\rho_{s}^{N}f_{s}^{N}$ and
\begin{align*}
	Gh&= \left(X_{f} - \varepsilon \grad \bigl(\rho\rho_{s}^{N}f_{s}^{N}\bigr)+\varepsilon\delta \grad \bigl(\rho\tilde{\rho}r\bigr)\right)%
		\left(f + \varepsilon \rho\rho_{s}^{N}f_{s}^{N}\right)\\
	&= X_{f}f +\varepsilon \left\{ X_{f}\bigl(\rho \rho_{s}^{N} f_{s}^{N}\bigr) - \left(\grad \bigl(\rho \rho_{s}^{N} f_{s}^{N}\bigr)\right)f \right\}\\
	&\qquad - \varepsilon^{2} \big\| \grad \bigl(\rho \rho_{s}^{N} f_{s}^{N}\bigr) \big\|^{2}%
		+\varepsilon\delta \left(\grad \bigl(\rho\tilde{\rho}r\bigr)\right)h\\
	&< -\inf_{[2\delta,1)\times T_{s}^{N}} \bigl\lvert X_{f} f \bigr\rvert%
		+\varepsilon \left\{ X_{f}\bigl(\rho \rho_{s}^{N} f_{s}^{N}\bigr)
		-(\grad f)\bigl(\rho \rho_{s}^{N} f_{s}^{N}\bigr)+\delta \left(\grad \bigl(\rho\tilde{\rho}r\bigr)\right)h\right\}\\
	&< -\inf_{[2\delta,1)\times T_{s}^{N}} \bigl\lvert X_{f} f \bigr\rvert\\
	&\qquad+ \sup_{[2\delta,1)\times T_{s}^{N}} \varepsilon\left\lvert X_{f} \bigl(\rho \rho_{s}^{N}f_{s}^{N}\bigr)%
		- (\grad f)\bigl(\rho \rho_{s}^{N} f_{s}^{N}\bigr)+\delta \left(\grad \bigl(\rho\tilde{\rho}r\bigr)\right)h \right\rvert< 0.
\end{align*}
Here we used the fact that $X_{f}f<0$ and \eqref{eq:5}.
Outside of all the neighborhoods we have considered till here, we have $h=f$ and $G=X_{f}$.
This implies that $G h=X_{f} f< 0$.

%\paragraph{(5):}
According to \cite{S}, the condition (5) is generically fulfilled for vector fields satisfying conditions (1)--(4).
Thus we choose $\varepsilon$ small enough so that $G$ satisfies the condition (5) if necessary.
\end{proof}

%%%%%%%%%%%%%%%%%%%%%%%%%%%%%%%%%%%%%%%%%%%%%%%%%%%%%%%%%%%%%%%%%%%%%%%%%%%
%%%%%%%%%%%%%%%%%%%%%%%%%%%%%%%%%%%%%%%%%%%%%%%%%%%%%%%%%%%%%%%%%%%%%%%%%%%
%%%%%%%%%%%%%%%%%%%%%%%%%%%%%%%%%%%%%%%%%%%%%%%%%%%%%%%%%%%%%%%%%%%%%%%%%%%

\subsection{Lemmas}

Let $\partial_{\ast}^h$ be the boundary operator of the Morse complex for the Morse function $h$ defined in \cite{Laudenbach}.
For $j=1,\ldots,\ell$ and $s=1,\ldots,\ell_N$, let $\partial_{\ast}^j=\partial_{\ast}^{f_j;o(\nu^-C_j)}$ and $\partial_{\ast}^{s,N}=\partial_{\ast}^{f_s^N;o(\nu^-\Gamma_s)}$
be the boundary operators for the Morse functions $f_{j}$ and $f_{s}^{N}$ with local coefficients in $o(\nu^-C_j)$ and $o(\nu^-\Gamma_s)$
defined in Section \ref{section:preliminaries}, respectively.

Let us denote by $o(\nu^-C_j)_x\cong\mathbb{Z}$ the fiber of $o(\nu^-C_j)$ over a point $x\in C_j$.
We fix a base point $\ast\in C_j$.
For every $p\in\Cr(f_j)$, let $c_p\colon [0,1]\to C_j$ be a path such that $c_p(0)=\ast$ and $c_p(1)=p$.
Since the interval $[0,1]$ is contractible and then the pull-back bundle $c_p^{\ast}o(\nu^-C_j)\to [0,1]$ is trivial,
we can choose bases $1_p$ of $o(\nu^-C_j)_p$, $p\in\Cr(f_j)$, so that 
the associated isomorphism $\Phi_{c_p}\colon o(\nu^-C_j)_{\ast}\to o(\nu^-C_j)_p$ satisfies $\Phi_{c_p}(1_{\ast})=1_p$.
Similarly, we choose bases $1_p$ of $o(\nu^-\Gamma_s)_p$, $p\in\Cr(f_s^N)$.

We note that an orientation of the unstable manifold $W_{f_j}^u(p)$ of a critical point $p$
is determined by an orientation of the tangent space $T_pW_{f_j}^u(p)$ at $p$.
We fix orientations of the unstable manifolds $W_G^u(p)$, $p\in\Cr(h)$,
where $G$ is the adapted pseudo-gradient vector field for $h$ defined in Subsection \ref{subsection:pt}.
Since we have the direct sum $T_pW_G^u(p)=T_pW_{f_j}^u(p)\oplus\nu_p^-C_j$ at $p$,
we get an orientation of $W_{f_j}^u(p)$.
In the same manner, we choose orientations of $W_{f_s^N}^u(p)$, $p\in\Cr(f_s^N)$.
The following lemmas are generalizations of \cite[Lemma 9 and Corollary 10]{BH}.

\begin{lemma}\label{lemma:1st}
Let $j=1,\ldots,\ell$ and $s=1,\ldots,\ell_N$.
For all critical points $p$, $q\in \Cr (f_{j})$ $($resp.\ $\Cr (f_{s}^{N})$$)$ of relative index one
and all unparametrized $($pseudo-$)$gradient flow lines $\gamma$ from $p$ to $q$,
we have
\[
	n_h(\gamma)1_q=n_{f_j}(\gamma)\Phi_{\tilde{\gamma}}(1_p)
\]
$($resp.\ $n_h(\gamma)1_q=n_{f_s^N}(\gamma)\Phi_{\tilde{\gamma}}(1_p)$$)$.
\end{lemma}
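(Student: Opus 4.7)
The plan is to reduce the identity to a comparison of orientations in a tubular neighborhood, splitting into tangential and normal contributions. First I would work in the tubular neighborhood $T_j$, where by construction $h = f + \varepsilon \rho_j f_j$ and $G = -\grad h$ by \eqref{eq:6}. Since $f_j$ was extended to be constant in directions normal to $C_j$ and $\rho_j \equiv 1$ on $\tilde{T}_j$, the gradient $\grad(\rho_j f_j)$ is tangent to $C_j$ near $C_j$, while $\grad f$ vanishes on $C_j$; hence $C_j$ is $G$-invariant in a neighborhood of itself and the restricted dynamics coincides with the negative gradient flow of $f_j$ on $C_j$. In particular, any flow line $\gamma$ of $G$ between $p,q \in \Cr(f_j) \subset C_j$ lies in $C_j$ and coincides with a $-\grad f_j$-flow line there.

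Next I would exploit the decomposition $T_p W_G^u(p) = T_p W_{f_j}^u(p) \oplus \nu_p^- C_j$ (and similarly at $q$), which is precisely how the orientations of $W_{f_j}^u(p)$ and $W_{f_j}^u(q)$ were induced from the fixed orientations of $W_G^u(p)$, $W_G^u(q)$ together with the chosen bases $1_p$, $1_q$ of the orientation bundle of $\nu^- C_j$. Along $\gamma$ the linearized flow $d\phi_t$ of $G$ preserves the splitting $TM|_\gamma = TC_j|_\gamma \oplus \nu^- C_j|_\gamma \oplus \nu^+ C_j|_\gamma$, because on a neighborhood of $C_j$ the Hessian of $f$ is block-diagonal with respect to this splitting and $\grad(\rho_j f_j)$ is tangent to $C_j$. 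On the $\nu^- C_j$ summand, $d\phi_t$ is an orientation-preserving linear expansion fiberwise, so the induced action on the orientation bundle $o(\nu^- C_j)$ over $\gamma$ agrees with the parallel transport $\Phi_{\tilde{\gamma}}$; since $[0,1]$ is contractible, any two such orientation transports coincide, so this identification is unambiguous.

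With this in place, the sign $n_h(\gamma)$ is defined by the comparison between the chosen orientation of $W_G^u(q)$ and the orientation obtained at $q$ by flowing the orientation of $W_G^u(p)$ (and quotienting by the direction of $\dot{\gamma}$). Under the splittings at $p$ and $q$, this comparison factors as a product of (i) the same comparison for the $W_{f_j}^u$-parts, which is by definition $n_{f_j}(\gamma)$, and (ii) the comparison of $1_p$ transported to $q$ via the linearized flow on $\nu^- C_j$ with the basis $1_q$, which by the previous paragraph equals $\Phi_{\tilde{\gamma}}(1_p)$ expressed in the basis $1_q$. Multiplying these two contributions gives the desired identity $n_h(\gamma)\, 1_q = n_{f_j}(\gamma)\, \Phi_{\tilde{\gamma}}(1_p)$. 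The case of $\Gamma_s \subset N(h)$ is handled in exactly the same way, working on $[0,2\delta)\times T_s^N$ where by \eqref{eq:7} and \eqref{eq:9} the vector field $G$ has the local model $-\grad h|_{\partial M} - r\,\partial/\partial r$ on $B_{\tilde{\delta}}(\gamma)$; flow lines between critical points of $f_s^N$ stay on $\Gamma_s \subset \partial M$, the $r$-factor is orientation-preserving and contributes trivially, and the splitting $T_p W_G^u(p) = T_p W_{f_s^N}^u(p) \oplus \nu_p^- \Gamma_s$ yields the analogous identity.

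The main obstacle is the second paragraph: justifying that the linearized flow on $\nu^- C_j$ along $\gamma$ really induces the parallel transport $\Phi_{\tilde{\gamma}}$ in the orientation local system. This is not about the magnitude of the expansion but about its orientation behavior on a flat $\mathbb{Z}$-bundle of rank one; the key point to check carefully is that the $\nu^-$ summand is genuinely $d\phi_t$-invariant along $\gamma$ (so the action is a well-defined path of isomorphisms $\nu_{\gamma(t)}^- C_j \to \nu_{\gamma(0)}^- C_j$), and that any such continuous path of orientation-preserving isomorphisms over the contractible interval $[0,1]$ represents $\Phi_{\tilde{\gamma}}$ in $o(\nu^- C_j)$. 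Everything else is a bookkeeping exercise with the direct-sum orientation convention.
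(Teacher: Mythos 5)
Your proposal is correct and takes essentially the same route as the paper: the heart of both arguments is that the splitting $T_pW_G^u(p)=T_pW_{f_j}^u(p)\oplus\nu_p^-C_j$ (and the corresponding one at $q$) reduces the sign discrepancy between $n_h(\gamma)$ and $n_{f_j}(\gamma)$ to the orientation transport in $o(\nu^-C_j)$ along $\tilde\gamma$. The paper merely packages this via the loop $l=\tilde\gamma\cdot c_q^{-1}\cdot c_p$ and the holonomy identity $n_h(\gamma)1_p=n_{f_j}(\gamma)\Phi_l(1_p)$; since $1_p=\Phi_{c_p}(1_\ast)$ and $1_q=\Phi_{c_q}(1_\ast)$, that identity unwinds to exactly your direct statement $n_h(\gamma)1_q=n_{f_j}(\gamma)\Phi_{\tilde\gamma}(1_p)$.
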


\begin{proof}
We will focus on $C_j$.
By the definition of $h$ (and the choice of $\varepsilon$), the gradient flow lines connecting two critical points in $C_j$ are the same (see the proof of \cite[Lemma 9]{BH}).

The signs $n_h(\gamma)$ and $n_{f_j}(\gamma)$ are determined by the orientations of the unstable manifolds $W_G^u(p)$, $W_G^u(q)$, $W_{f_j}^u(p)$ and $W_{f_j}^u(q)$ chosen above.
We define a loop $l\colon S^1=\mathbb{R}/\mathbb{Z}\to C_j$ by
\[
l(t)=%
\begin{cases}
\tilde{\gamma}(3t) & \text{if\: $0\leq t\leq 1/3$},\\
c_q(2-3t) & \text{if\: $1/3\leq t\leq 2/3$},\\
c_p(3t-2) & \text{if\: $2/3\leq t\leq 1$}.
\end{cases}
\]
Since we have the direct sum $T_xW_G^u(p)=T_xW_{f_j}^u(p)\oplus\nu_x^-C_j$ for every $x\in W_G^u(p)\cap C_j$,
the orientations of $W_G^u(p)$ and $W_{f_j}^u(p)$ is not compatible along the loop $l$
as long as the pull-back bundle $l^{\ast}o(\nu^-C_j)\to S^1$ is non-trivial.
Hence the ambiguity of these orientations appears as the multiplication by $\Phi_l(1_p)$.
Namely, we have
\[
	n_h(\gamma)1_p=n_{f_j}(\gamma)\Phi_l(1_p).
\]
Therefore,
\begin{align*}
	n_h(\gamma)1_q
	&=n_h(\gamma)\Phi_{c_q}\left(\Phi_{c_p}^{-1}\bigl(1_p\bigr)\right)
	=\Phi_{c_q}\left(\Phi_{c_p}^{-1}\bigl(n_h(\gamma)1_p\bigr)\right)\\
	&=\Phi_{c_q}\left(\Phi_{c_p}^{-1}\bigl(n_{f_j}(\gamma)\Phi_l(1_p)\bigr)\right)
	=n_{f_j}(\gamma)\Phi_{\tilde{\gamma}}(1_p).\qedhere
\end{align*}
\end{proof}

Now we order $C_{1}$, $\ldots\,$,~$C_{\ell}$, $\Gamma_{1}$, $\ldots\,$,~$\Gamma_{\ell_N}$ by ``height''.
That is, for
\[
	\{ B_1,\ldots,B_{\ell+\ell_N}\} =\{C_1,\ldots,C_{\ell},\Gamma_1,\ldots,\Gamma_{\ell_N}\},
\]
we assume that the order of $B_{1}$, $\ldots\,$,~$B_{\ell+\ell_N}$ is ascending, i.e.,
$f(B_{i}) \leq f(B_{j})$ whenever $i\leq j$.
On the other hand, by Lemma \ref{lemma:main}, for each $n=0,1,\ldots,m$,
the $n$-chain group $F_{n}^{N}(h)$ of the Morse complex (see \cite{Laudenbach} for the definition) of $h$ is of the form:
\[
	F_{n}^{N}(h)= \bigoplus_{j=1}^{\ell}C_{n-\lambda_{j}}(f_{j};\mathbb{Z})\oplus \bigoplus_{s=1}^{\ell_N}C_{n-\mu_{s}^{N}}(f_{s}^{N};\mathbb{Z}),
\]
where $C_{n-\lambda_{j}}(f_{j};\mathbb{Z})$ and $C_{n-\mu_{s}^{N}}(f_{s}^{N};\mathbb{Z})$ are the free $\mathbb{Z}$-modules generated by
the elements of $\Cr_{n-\lambda_{j}}(f_{j})$ and $\Cr_{n-\mu_{s}^{N}}(f_{s}^{N})$, respectively.
Therefore any $\beta\in F_{n}^{N}(h)\setminus \{0\}$ can be uniquely decomposed into
the sum of $n$-chains of $\Cr_{n-\lambda_{j}}(f_{j})$ and $\Cr_{n-\mu_{s}^{N}}(f_{s}^{N})$, $\beta= \beta_{j_{1}}+\cdots+\beta_{j_{r}}$
where $j_{1}<\cdots< j_{r}$ and for $i=1,\ldots,r$ there exists $j=1,\ldots,\ell$ or $s=1,\ldots,\ell_N$ such that
$\beta_{j_{i}} \in C_{n-\lambda_{j}}(f_{j};\mathbb{Z})\setminus \{0\}$ or $\beta_{j_{i}} \in C_{n-\mu_{s}^{N}}(f_{s}^{N};\mathbb{Z})\setminus \{0\}$ respectively.
We call $\beta_{j_{r}}$ the \textit{top chain} of $\beta$ and denote by $\tp{\beta}$.

\begin{lemma}\label{lemma:2nd}
Let $n=0,1,\ldots,m$.
If $\beta\in \Ker \partial_{n}^{h}$,
then we have
\[
	\mbox{$1$}\hspace{-0.25em}\mbox{$\mathrm{l}$}\otimes\tp{\beta}\in \Ker \partial_{n-\lambda_j}^j%
	\quad\text{or}\quad
	\mbox{$1$}\hspace{-0.25em}\mbox{$\mathrm{l}$}\otimes\tp{\beta}\in \Ker \partial_{n-\mu_{s}^{N}}^{s,N},
\]
where $\mbox{$1$}\hspace{-0.25em}\mbox{$\mathrm{l}$}=\sum_{q\in\tp{\beta}}1_q\in o(\nu^-C_j)$ $($the sum is taken over generators$)$
and $\tp{\beta}\in C_{n-\lambda_{j}}(f_{j};\mathbb{Z})$ for some $j=1,\ldots,\ell$,
or $\mbox{$1$}\hspace{-0.25em}\mbox{$\mathrm{l}$}=\sum 1_q\in o(\nu^-\Gamma_s)$
and $\tp{\beta}\in C_{n-\mu_{s}^{N}}(f_{s}^{N};\mathbb{Z})$ for some $s=1,\ldots,\ell_N$, respectively.
\end{lemma}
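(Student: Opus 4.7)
The plan is to extract the top chain condition by comparing $\partial_n^h$ on the top summand with the local-coefficient boundary operator $\partial_{n-\lambda_j}^j$ (or $\partial_{n-\mu_s^N}^{s,N}$). The key geometric input is that every $G$-flow line connecting two critical points of $h$ which lie in distinct $f$-critical submanifolds must strictly descend in the height ordering $B_1,\ldots,B_{\ell+\ell_N}$. This follows by combining the pseudo-gradient identity $Gh<0$, the $\varepsilon$-closeness of $G$ to $X_f$ and of $h$ to $f$, and conditions (4)--(5) of Subsection \ref{subsection:pt} on the tubular neighborhoods, which together rule out a $G$-flow line from a lower critical submanifold to a strictly higher one.

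Now suppose for concreteness that $\tp\beta=\beta_{j_r}\in C_{n-\lambda_j}(f_j;\mathbb{Z})$ for some $C_j$; the boundary case with $\Gamma_s$ in place of $C_j$ proceeds identically using the explicit form \eqref{eq:7}--\eqref{eq:9} of $h$ and $G$ on a neighborhood of the type-$N$ critical points. I would project the equation $\partial_n^h\beta=0$ onto the summand $\bigoplus_{q\in\Cr_{n-\lambda_j-1}(f_j)}\mathbb{Z}\langle q\rangle$ of $F_{n-1}^N(h)$. By the descending property, none of the chains $\beta_{j_i}$ with $i<r$ contributes to this projection, so the projection reduces to the contribution from $\beta_{j_r}$ via $G$-flow lines that start and end inside $T_j$. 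By \eqref{eq:6}, such $G$-flow lines coincide with the (minus) gradient flow lines of $f_j$ on $C_j$.

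The final step is to translate the signed $h$-count into the signed $f_j$-count with local coefficients. Lemma \ref{lemma:1st} provides exactly the identity $n_h(\gamma)1_q=n_{f_j}(\gamma)\Phi_{\tilde\gamma}(1_p)$ needed for this. Writing $\tp\beta=\sum_p n_p\,p$ and $\1\otimes\tp\beta=\sum_p n_p(1_p\otimes p)$ and expanding both boundary operators term by term, the projected vanishing becomes exactly $\partial_{n-\lambda_j}^j(\1\otimes\tp\beta)=0$, which is the desired conclusion. The principal obstacle is the descending-flow-line claim, which is the only place where the carefully arranged $\varepsilon$-smallness and neighborhood conditions of Subsection \ref{subsection:pt} are really used; everything else is bookkeeping using Lemma \ref{lemma:1st} and the explicit local form of $h$ and $G$ near the top critical submanifold.
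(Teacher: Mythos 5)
Your proposal is correct and follows essentially the same route as the paper's own proof: project $\partial_n^h\beta=0$ onto the component indexed by $\Cr_{n-\lambda_j-1}(f_j)$ (where $C_j$ is the top critical submanifold appearing in $\beta$), observe via the height-descending property of $G$-flow lines that only $\tp\beta$ contributes there, identify $G$-flow lines within $T_j$ with $f_j$-gradient flow lines on $C_j$, and invoke Lemma~\ref{lemma:1st} to convert the vanishing into $\partial_{n-\lambda_j}^j(\1\otimes\tp\beta)=0$. You actually spell out the descending-flow-line claim and its dependence on conditions (4)--(5) of Subsection~\ref{subsection:pt} more explicitly than the paper, which compresses this to the phrase ``$\mathcal{M}_G(q_i,p)=\emptyset$ \ldots\ by the definition of top chains,'' but this is a matter of exposition, not a difference in method.
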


\begin{proof}
Let $\beta=\sum_in_iq_i$ where $n_i\in\mathbb{Z}$ and $q_i\in\Cr_n(h)$.
If $\beta\in \Ker \partial_{n}^{h}$, we then have
\begin{align*}
	0&=\partial_n^h(\beta)=\sum_in_i\partial_n^h(q_i)%
	=\sum_in_i\left\{\sum_{p\in I_{n-1}(h)\cup N_{n-1}(h)}\left(\sum_{\gamma\in\mathcal{M}_G(q_i,p)}n_h(\gamma)\right)p\right\}\\
	&=\sum_{p\in I_{n-1}(h)\cup N_{n-1}(h)}\left\{\sum_in_i\left(\sum_{\gamma\in\mathcal{M}_G(q_i,p)}n_h(\gamma)\right)\right\}p.
\end{align*}
Hence for every $p\in I_{n-1}(h)\cup N_{n-1}(h)$ we have
\[
	\sum_in_i\left(\sum_{\gamma\in\mathcal{M}_G(q_i,p)}n_h(\gamma)\right)=0.
\]
From now on, we will focus on $C_j$'s.
Namely, we assume that $\tp{\beta}\in C_{n-\lambda_{j}}(f_{j};\mathbb{Z})$ for some $j=1,\ldots,\ell$.
Then, in particular, for every $p\in I_{n-1}(h)\cap C_j=\Cr_{n-\lambda_j-1}(f_j)$ we have
\[
	0=\sum_in_i\left(\sum_{\gamma\in\mathcal{M}_G(q_i,p)}n_h(\gamma)\right)=\sum_{q_i\in C_j}n_i\left(\sum_{\gamma\in\mathcal{M}_G(q_i,p)}n_h(\gamma)\right)
\]
since $\mathcal{M}_G(q_i,p)=\emptyset$ for every $q_i\not\in C_j$ by the definition of top chains.
Therefore, we have
\begin{align*}
	\partial_{n-\lambda_j}^j(\1\otimes\tp{\beta})&=\sum_{q_i\in C_j}n_i\partial_{n-\lambda_j}^j(1_{q_i}\otimes q_i)\\
	&=\sum_{q_i\in C_j}n_i\left\{\sum_{p\in \Cr_{n-\lambda_j-1}(f_j)}\left(\sum_{\gamma\in\mathcal{M}_{f_j}(q_i,p)}n_{f_j}(\gamma)\Phi_{\tilde{\gamma}}(1_{q_i})\right)\otimes p\right\}\\
	&=\sum_{p\in \Cr_{n-\lambda_j-1}(f_j)}\left\{\sum_{q_i\in C_j}n_i\left(\sum_{\gamma\in\mathcal{M}_{f_j}(q_i,p)}n_{f_j}(\gamma)\Phi_{\tilde{\gamma}}(1_{q_i})\right)\right\}\otimes p\\
	&=\sum_{p\in I_{n-1}(h)\cap C_j}\left\{\sum_{q_i\in C_j}n_i\left(\sum_{\gamma\in\mathcal{M}_G(q_i,p)}n_h(\gamma)1_p\right)\right\}\otimes p\\
	&=\sum_{p\in I_{n-1}(h)\cap C_j}\left\{\sum_{q_i\in C_j}n_i\left(\sum_{\gamma\in\mathcal{M}_G(q_i,p)}n_h(\gamma)\right)\right\}(1_p\otimes p)\\
	&=0.
\end{align*}
Here we applied Lemma \ref{lemma:1st} and used the fact that $\mathcal{M}_G(q_i,p)$ coincides with $\mathcal{M}_{f_j}(q_i,p)$ whenever $q_i$, $p\in C_j$.
\end{proof}

%%%%%%%%%%%%%%%%%%%%%%%%%%%%%%%%%%%%%%%%%%%%%%%%%%%%%%%%%%%%%%%%%%%%%%%%%%%
%%%%%%%%%%%%%%%%%%%%%%%%%%%%%%%%%%%%%%%%%%%%%%%%%%%%%%%%%%%%%%%%%%%%%%%%%%%
%%%%%%%%%%%%%%%%%%%%%%%%%%%%%%%%%%%%%%%%%%%%%%%%%%%%%%%%%%%%%%%%%%%%%%%%%%%

\subsection{Proof of Main Theorem}

Under these preparations, the remaining part of the proof
is based on a modification of \cite{BH}.

\begin{proof}[Proof of Theorem \ref{theorem:main}]
Applying Theorem \ref{theorem:mi} for the Morse functions $f_{1}$, $\ldots\,$,~$f_{\ell}$ and $f_{1}^{N}$, $\ldots\,$,~$f_{\ell_N}^{N}$,
there exist polynomials $\mathcal{R}_{1}(t)$, $\ldots\,$,~$\mathcal{R}_{\ell}(t)$ and
$\mathcal{R}_{1}^{N}(t)$, $\ldots\,$,~$\mathcal{R}_{\ell_N}^{N}(t)$ with non-negative integer coefficients,
such that for $j=1,\ldots,\ell$ and $s=1,\ldots,\ell_N$,
\[
	\mathcal{M}_t(f_j)-\mathcal{P}_t\bigl(C_j;o(\nu^-C_j)\bigr)=(1+t)\mathcal{R}_j(t)
\]
and
\[
	\mathcal{M}_t(f_s^N) - \mathcal{P}_t\bigl(\Gamma_s;o(\nu^-\Gamma_s)\bigr)=(1+t)\mathcal{R}_s^N(t),
\]
respectively.
Here we note that $\rank{o(\nu^-C_j)}=\rank{o(\nu^-\Gamma_s)}=1$.
Moreover, by \cite[Corollary A]{Laudenbach}, for the Morse function $h$,
there exists a polynomial $\mathcal{R}_{h}(t)$ with non-negative integer coefficients such that
\[
	\mathcal{M}_{t}^{N}(h) - \mathcal{P}_{t}(M;\mathbb{Z})= (1+t)\mathcal{R}_{h}(t),
\]
where
\[
	\mathcal{M}_{t}^{N}(h)= \sum_{p\in I(h)} t^{\ind_{h}p} + \sum_{\gamma\in N(h)} t^{\ind_{h|_{\partial M}} \gamma}
\]
is the Morse counting polynomial of type $N$ of $h$.
By Lemma \ref{lemma:main}, $\mathcal{M}_{t}^{N}(h)$ can be computed as follows:
\begin{align*}
	\mathcal{M}_{t}^{N}(h)%
	&= \sum_{j,\, k} \# \Cr_{k} (f_{j})\, t^{\lambda_{j}+k} + \sum_{s,\, k} \# \Cr_{k} (f_{s}^{N})\, t^{\mu_{s}^{N}+k}\\
	&= \sum_{j=1}^{\ell} \mathcal{M}_{t}(f_{j})\, t^{\lambda_{j}} + \sum_{s=1}^{\ell_N} \mathcal{M}_{t}(f_{s}^{N})\, t^{\mu_{s}^{N}}.
\end{align*}
Hence we have
\begin{align*}
	\mathcal{MB}^{N}_{t}(f)%
	&=\sum_{j=1}^{\ell} \mathcal{P}_{t}\bigl(C_{j};o(\nu^-C_j)\bigr)\, t^{\lambda_{j}} + \sum_{s=1}^{\ell_N} \mathcal{P}_{t}\bigl(\Gamma_{s};o(\nu^-\Gamma_{s})\bigr)\, t^{\mu_{s}^{N}}\\
	&= \sum_{j=1}^{\ell} \left\{ \mathcal{M}_{t}(f_{j}) - (1+t)\mathcal{R}_{j}(t) \right\} t^{\lambda_{j}}%
	+ \sum_{s=1}^{\ell_N} \left\{ \mathcal{M}_{t}(f_{s}^{N}) - (1+t)\mathcal{R}_{s}^{N}(t) \right\} t^{\mu_{s}^{N}}\\
	&= \mathcal{M}_{t}^{N}(h)%
		- (1+t) \left( \sum_{j=1}^{\ell} \mathcal{R}_{j}(t)\, t^{\lambda_{j}} + \sum_{s=1}^{\ell_N} \mathcal{R}_{s}^{N}(t)\, t^{\mu_{s}^{N}} \right)\\
	&= \mathcal{P}_{t}(M;\mathbb{Z}) + (1+t) \left\{ \mathcal{R}_{h}(t) - \left( \sum_{j=1}^{\ell} \mathcal{R}_{j}(t)\, t^{\lambda_{j}}%
	+ \sum_{s=1}^{\ell_N} \mathcal{R}_{s}^{N}(t)\, t^{\mu_{s}^{N}} \right) \right\}.
\end{align*}
We set
$\mathcal{R}(t) = \mathcal{R}_{h}(t) - \left( \sum_{j=1}^{\ell} \mathcal{R}_{j}(t)\, t^{\lambda_{j}} + \sum_{s=1}^{\ell_N} \mathcal{R}_{s}^{N}(t)\, t^{\mu_{s}^{N}} \right)$.
It is enough to show that all the coefficients of the polynomial $\mathcal{R}(t)$ are non-negative.

According to the proof of \cite[Theorem 3]{BH},
$\mathcal{R}_{1}(t)$, $\ldots\,$,~$\mathcal{R}_{\ell}(t)$, $\mathcal{R}_{1}^{N}(t)$,
$\ldots\,$,~$\mathcal{R}_{\ell_N}^{N}(t)$ and $\mathcal{R}_{h}(t)$ have the following specific forms:
For all $j=1,\ldots,\ell$ and $s=1,\ldots,\ell_N$,
\begin{gather*}
	\mathcal{R}_{j}(t)= \sum_{k=1}^{c_{j}} \left(\nu_{k}^{j}-z_{k}^{j} \right)t^{k-1},%
	\quad \mathcal{R}_{s}^{N}(t)= \sum_{k=1}^{d_{s}^{N}} \left(\nu_{k}^{s,N}-z_{k}^{s,N} \right)t^{k-1}\\
	\text{and}\qquad\mathcal{R}_{h}(t)= \sum_{n=1}^{m} \left(\nu_{n}+\nu_{n}^{N}-z_{n} \right)t^{n-1},
\end{gather*}
where
\[
	\nu_{k}^{j}=\# \Cr_{k} (f_{j}),%
	\quad z_{k}^{j}= \rank_{\mathbb{Z}}\Ker \partial_{k}^{j},
\]
\[
	\quad \nu_{k}^{s,N}=\# \Cr_{k} (f_{s}^{N}),%
	\quad z_{k}^{s,N}= \rank_{\mathbb{Z}}\Ker \partial_{k}^{s,N},
\]
\[
	\nu_{n}=\# I_{n}(h),%
	\quad \nu_{n}^{N}=\# N_{n}(h),%
	\quad z_{n}= \rank_{\mathbb{Z}}\Ker \partial_{n}^{h}.
\]
Therefore
\begin{align*}
	\mathcal{R}(t)%
	&= \sum_{n=1}^{m} \left(\nu_{n}+\nu_{n}^{N}-z_{n} \right)t^{n-1}\\
	&\qquad- \left\{ \sum_{j=1}^{\ell} \sum_{k=1}^{c_{j}} \left(\nu_{k}^{j}-z_{k}^{j} \right)t^{\lambda_{j}+k-1}%
		+ \sum_{s=1}^{\ell_N} \sum_{k=1}^{d_{s}^{N}} \left(\nu_{k}^{s,N}-z_{k}^{s,N} \right)t^{\mu_{s}^{N}+k-1} \right\}\\
	&= \left( \sum_{j=1}^{\ell} \sum_{k=1}^{c_{j}} z_{k}^{j} t^{\lambda_{j}+k-1}%
		+ \sum_{s=1}^{\ell_N} \sum_{k=1}^{d_{s}^{N}} z_{k}^{s,N} t^{\mu_{s}^{N}+k-1} \right) - \sum_{n=1}^{m} z_{n} t^{n-1}\\
	&= \sum_{n=1}^{m} \left( \sum_{\lambda_{j}+k=n} z_{k}^{j} + \sum_{\mu_{s}^{N}+k=n} z_{k}^{s,N} - z_{n} \right) t^{n-1}.
\end{align*}
Thus it is enough to show that for all $n=1,\ldots,m$,
\[
	\sum_{\lambda_{j}+k=n} z_{k}^{j} + \sum_{\mu_{s}^{N}+k=n} z_{k}^{s,N} \geq z_{n}.
\]

Fix $n=1,\ldots,m$. If $z_{n}=0$, the inequality holds.
Hence we may assume that $z_{n}>0$.

If $z_{n}=1$, there exists a non-zero element $\beta_{1}$ in $\Ker \partial_{n}^{h}$.
By Lemma \ref{lemma:2nd}, $\1\otimes\tp \beta_{1}\in \Ker \partial_{k_{1}}^{j_{1}}$ (or $\1\otimes\tp \beta_{1}\in \Ker \partial_{k_{1}}^{j_{1},N}$)
where $k_{1}$ and $j_{1}$ are integers satisfying $\lambda_{j_{1}}+k_{1}=n$ (or $\mu_{j_{1}}^{N}+k_{1}=n$).
Hence the inequality holds.

If $z_{n}=2$, we can find an element $\beta_{2}\in \Ker \partial_{n}^{h}$
which is not in the group generated by $\beta_{1}$,
and by adding a multiple of $\beta_{1}$ to $\beta_{2}$ if necessary,
we can choose $\beta_{2}$ whose top part $\1\otimes\tp \beta_{2}\in \Ker \partial_{k_{2}}^{j_{2}}$
(or $\1\otimes\tp \beta_{2}\in \Ker \partial_{k_{2}}^{j_{2},N}$) is not in the group generated by $\1\otimes\tp \beta_{1}$
where $k_{2}$ and $j_{2}$ are integers satisfying $\lambda_{j_{2}}+k_{2}=n$ (or $\mu_{j_{2}}^{N}+k_{2}=n$).
Hence the inequality holds.

Repeating this argument finitely many times,
we get generators in $\Ker \partial_{n}^{h}$ whose the tensor products of $\1$ and top parts are linearly independent in
\[
	\bigoplus_{\lambda_{j}+k=n} \Ker \partial_{k}^{j}\oplus \bigoplus_{\mu_{s}^{N}+k=n} \Ker \partial_{k}^{s,N}.
\]
Thus
\[
	\sum_{\lambda_{j}+k=n} \rank_{\mathbb{Z}}\Ker \partial_{k}^{j} + \sum_{\mu_{s}^{N}+k=n} \rank_{\mathbb{Z}}\Ker \partial_{k}^{s,N}\geq \rank_{\mathbb{Z}}\Ker \partial_{n}^{h}.\qedhere
\]
\end{proof}

%%%%%%%%%%%%%%%%%%%%%%%%%%%%%%%%%%%%%%%%%%%%%%%%%%%%%%%%%%%%%%%%%%%%%%%%%%%
%%%%%%%%%%%%%%%%%%%%%%%%%%%%%%%%%%%%%%%%%%%%%%%%%%%%%%%%%%%%%%%%%%%%%%%%%%%
%%%%%%%%%%%%%%%%%%%%%%%%%%%%%%%%%%%%%%%%%%%%%%%%%%%%%%%%%%%%%%%%%%%%%%%%%%%
%%%%%%%%%%%%%%%%%%%%%%%%%%%%%%%%%%%%%%%%%%%%%%%%%%%%%%%%%%%%%%%%%%%%%%%%%%%
%%%%%%%%%%%%%%%%%%%%%%%%%%%%%%%%%%%%%%%%%%%%%%%%%%%%%%%%%%%%%%%%%%%%%%%%%%%

\subsection*{Acknowledgement}

The author would like to express his sincere gratitude to his supervisor Professor Takashi Tsuboi for his practical advice and to the referee for very important remarks concerning local coefficients.

This work was supported by JSPS KAKENHI Grant Number JP02607057 and the Program for Leading Graduate Schools, MEXT, Japan.
The author was supported by the Grant-in-Aid for JSPS fellows.

%%%%%%%%%%%%%%%%%%%%%%%%%%%%%%%%%%%%%%%%%%%%%%%%%%%%%%%%%%%%%%%%%%%%%%%%%%%
%%%%%%%%%%%%%%%%%%%%%%%%%%%%%%%%%%%%%%%%%%%%%%%%%%%%%%%%%%%%%%%%%%%%%%%%%%%
%%%%%%%%%%%%%%%%%%%%%%%%%%%%%%%%%%%%%%%%%%%%%%%%%%%%%%%%%%%%%%%%%%%%%%%%%%%
%%%%%%%%%%%%%%%%%%%%%%%%%%%%%%%%%%%%%%%%%%%%%%%%%%%%%%%%%%%%%%%%%%%%%%%%%%%
%%%%%%%%%%%%%%%%%%%%%%%%%%%%%%%%%%%%%%%%%%%%%%%%%%%%%%%%%%%%%%%%%%%%%%%%%%%

\end{document}